\providecommand\@dotsep{5}
\def\listtodoname{List of Todos}
\def\listoftodos{\@starttoc{tdo}\listtodoname}
\numberwithin{equation}{section}
\newcommand{\e}{\epsilon}
\newcommand{\Om} {\Omega}
\newcommand{\la} {\lambda}
\newcommand{\La} {\Lambda}
\newcommand{\noi} {\noindent}
\newcommand{\mb} {\mathbb}
\newcommand{\mc} {\mathcal}
\markboth{\small } {\small Singular Finsler $p$-Laplace equation}
\def\theequation{\@arabic{\c@section}.\@arabic{\c@equation}}
\newtheorem{Theorem}{Theorem}[section]
\newtheorem{Lemma}[Theorem]{Lemma}
\newtheorem{Proposition}[Theorem]{Proposition}
\newtheorem{Corollary}[Theorem]{Corollary}
\newtheorem{Remark}[Theorem]{Remark}
\begin{document}

{\vspace{0.01in}}

\title
{ \sc On an Anisotropic $p$-Laplace equation with variable singular exponent}
\author{Kaushik Bal, Prashanta Garain and Tuhina Mukherjee}

\maketitle

\begin{abstract}
{In this article, we study the following anisotropic p-Laplacian equation with variable exponent given by 
\begin{equation*}
(P)\left\{\begin{split}
-\Delta_{H,p}u&=\frac{\la f(x)}{u^{q(x)}}+g(u)\text{ in }\Omega,\\
u&>0\text{ in }\Omega,\,u=0\text{ on }\partial\Omega,
\end{split}\right.
\end{equation*}
under the assumption $\Omega$ is a bounded smooth domain in $\mathbb{R}^N$ with $p,N\geq 2$, $\la>0$ and $0<q \in C(\bar \Om)$. For the purely singular case that is  $g\equiv 0$, we proved existence and uniqueness of solution. We also demonstrate the existence of multiple solution to $(P)$ provided $f\equiv 1$ and $g(u)=u^r$ for $r\in (p-1,p^*-1)$.}

\medskip

\noi {Key words: Anisotropic $p$-Laplace equation, Singular nonlinearity, Variable exponent, Approximation method, Variational approach.}

\medskip

\noi \textit{2010 Mathematics Subject Classification:} 35D30, 35J62, 35J75.
\end{abstract}

\section{Introduction and Main results}
 In this article, we establish the existence of unique  weak solution for the problem
\begin{equation}\label{maineqn}
\left\{\begin{split}
-\Delta_{H,p}u&=\frac{f(x)}{u^{q(x)}}\text{ in }\Omega,\\
u&>0\text{ in }\Omega,\,u=0\text{ on }\partial\Omega,
\end{split}\right.
\end{equation}
and existence of at least two weak solutions for
\begin{equation}\label{maineqn1}
\left\{\begin{split}
-\Delta_{H,p}u&=\frac{\la }{u^{q(x)}}+u^r\text{ in }\Omega,\\
u&>0\text{ in }\Omega,\,u=0\text{ on }\partial\Omega,
\end{split}\right.
\end{equation}
 where $\Omega$ is a bounded smooth domain in $\mathbb{R}^N$ with $N\geq 2$ and $0<q \in C(\bar \Om)$. Our other important assumptions for \eqref{maineqn} are $2\leq p<\infty$ and $f\in L^1(\Omega)\setminus\{0\}$ is a nonnegative function (see Theorem \ref{thm1}-\ref{thm1new} for precise assumptions). For the problem \eqref{maineqn1}, we assume the parameter $\lambda$ to be positive and $r\in(p-1,p^*-1)$, where $p^*=\frac{Np}{N-p}$ is the critical Sobolev exponent for $1<p<N$ (refer Theorem \ref{thm3}-\ref{thm3new}). The key operator in our problems, that is the Finsler p-Laplacian is defined as
 $$
 \Delta_{H,p}u:=\text{div}\big(H(\nabla u)^{p-1}\nabla_{\eta}H(\nabla u)\big),
 $$ where $\nabla_{\eta}$ denotes the gradient operator with respect to the $\eta$ variable and $H:\mathbb{R}^N\to[0,\infty)$ is known as the Finsler-Minkowski norm satisfying the following hypothesis:
\begin{enumerate}
\item[(H0)] $H(x)\geq 0$, for every $x\in\mathbb{R}^N$.
\item[(H1)] $H(x)=0$, if and only if $x=0$.
\item[(H2)] $H(tx)=|t|H(x)$, for every $x\in\mathbb{R}^N$ and $t\in\mathbb{R}$.
\item[(H3)] $H\in C^{\infty}\left(\mathbb{R}^N\setminus\{0\}\right)$.
\item[(H4)] the Hessian matrix $\nabla_{\eta}^2\big(\frac{H^2}{2}\big)(x)$ is positive definite for all $x\in\mathbb{R}^N\setminus\{0\}$.
\end{enumerate}
The dual $H_0:\mathbb{R}^N\to[0,\infty)$ of $H$ is defined by 
\begin{equation}\label{dual}
H_0(\xi):=\sup_{x\in\mathbb{R}^N\setminus\{0\}}\frac{\langle x,\xi\rangle}{H(x)}.
\end{equation}
More details on this can be found in Bao-Chern-Shen \cite{BCS}, Xia \cite{Xiathesis} and Rockafellar \cite{Rbook}.
 
Anisotropic $p$-Laplace equations has been a topic of considerable attention in the recent years. We refer to Alvino-Ferone-Trombetti-Lions \cite{AFTL}, Belloni-Ferone-Kawohl \cite{BFKzamp}, Belloni-Kawohl-Juutinen \cite{BKJ}, Bianchini-Giulio \cite{BC}, Xia \cite{Xiathesis},  Cianchi-Salani\cite{CS}, Ferone-Kawohl \cite{FK}, Kawohl-Novaga \cite{KN} and the references therein, for detailed analysis of the operator and its important features.
We present some remarks and examples now to give a little more insight on the anisotropic p-Laplace operator and the adjoined norm. 
\begin{Remark}\label{hypormk2}
Since all norms in $\mathbb{R}^N$ are equivalent, there exist positive constants $C_1, C_2$ such that
$$
C_1|x|\leq H(x)\leq C_2|x|,\quad\forall \,x\in\mathbb{R}^N.
$$
We are able to define an equivalent norm on the Sobolev space $W_0^{1,p}(\Om)$ as 
\begin{equation}\label{equinorm}
\|u\|_{X}:=\left(\int_{\Om}H(\nabla u)^p\,dx\right)^{\frac{1}{p}}.
\end{equation}
\end{Remark}
\noi \textbf{Examples:} Let $x=(x_1,x_2,\ldots,x_N)\in\mathbb{R}^N$,
\begin{enumerate}
\item[(i)] then, for $t>1$, we define 
\begin{equation}\label{ex1}
H_t(x):=\Big(\sum_{i=1}^{N}|x_i|^t\Big)^\frac{1}{t};
\end{equation}
\item[(ii)] for $\lambda,\mu>0$, we define
\begin{equation}\label{ex2}
H_{\lambda,\mu}(x):=\sqrt{\lambda\sqrt{\sum_{i=1}^{N}x_i^{4}}+\mu\sum_{i=1}^{N}x_i^{2}}.
\end{equation}
\end{enumerate}
Then, one may check that the functions $H_t, H_{\lambda,\mu}:\mathbb{R}^N\to[0,\infty)$ given by \eqref{ex1} and \eqref{ex2} satisfies all the hypothesis from $(H0)-(H4)$ or refer Mezei-Vas \cite{MV}.

\begin{Remark}\label{exrmk1}
For $i=1,2$ if $\lambda_i,\mu_i$ are positive real numbers such that $\frac{\lambda_1}{\mu_1}\neq\frac{\lambda_2}{\mu_2}$, then $H_{\lambda_1,\mu_1}$ and $H_{\lambda_2,\mu_2}$ given by \eqref{ex2} defines two non-isometric norms in $\mathbb{R}^N$.
\end{Remark}

\begin{Remark}\label{exrmk2}
Moreover, for $H=H_t$ given by \eqref{ex1} we have
\begin{equation}\label{ex}
\Delta_{H_t,p}u=
\begin{cases}
\Delta_p u:=\text{div}(|\nabla u|^{p-2}\nabla u),\, (p\text{-Laplacian})\,\text{if }t=2,\,1<p<\infty,\\
\mathcal{S}_p=\sum_{i=1}^{N}\frac{\partial}{\partial x_i}\Big(|u_i|^{p-2}u_i\Big),\,\text{if }t=p\in(1,\infty),
\end{cases}
\end{equation}
where $u_i:=\frac{\partial u}{\partial x_i}$, for $i=1,2,\ldots,N$.
\end{Remark}
This last remark puts an important touch to our problem stating that Finsler p-Laplacian generalizes the $p$-Laplacian. So our result are true for a general set of quasilinear operators and we strongly point out here that \eqref{maineqn} and \eqref{maineqn1} are new even for Laplacian.

 The equations \eqref{maineqn} and \eqref{maineqn1} are singular in the sense that the nonlinearities in our consideration blow up near the origin, due to the presence of the term $u^{-q(x)}$ where $q$ is a positive function. The fundamental feature of our article is the singular variable exponent $q:\overline{\Om}\to(0,\infty)$ which is a continuous function up to the boundary and it is positive. Due to the blow up phenomenon, critical point theory fails here and the standard technique of approximating the problem with a non singular set up comes to our rescue. Another striking feature is the singular exponent being a function here which covers both $0<q(x)<1$ and $q(x)\geq 1$ cases together. In the $0<q(x)<1$ for all $x\in \Om$, we may employ some well known techniques to derive the weak solution. But when $q$ is not bounded by $1$, under an additional assumption of restricting $q$ in $(0,1]$ near the boundary of $\Om$, we were able to show existence of unique weak solution to \eqref{maineqn} i.e. purely singular case. For \eqref{maineqn1}, fixing $0<q(x)<1$ we show that the variational techniques using the Gateaux differentiability of corresponding energy functional can be advantageous to show existence of at least two weak solutions. 
 
 When $q(x)=q$ is a fixed positive constant, singular problems has been investigated in many different contexts in the recent years. In particular, the $p$-Laplace equation
 \begin{equation}\label{pLap}
 -\Delta_p u=h(x,u)\text{ in }\Om,\,u>0\text{ in }\Om,\,u=0\text{ on }\partial\Om,
 \end{equation}
 has been studied concerning the existence, uniqueness and multiplicity results widely for any $1<p<\infty$, including both purely singular and perturbed singular nonlinearity $h$.
 
 In the purely singular case, we refer the reader to Crandall-Rabinowitz-Tartar \cite{CRT}, Boccardo-Orsina \cite{BocOr} for the semilinear case and De Cave \cite{DeCave}, Canino-Sciunzi-Trombetta \cite{Canino} for the quasilinear case respectively. Whereas for the perturbed singular case, one can look into Arcoya-Boccardo \cite{arcoya} for the semilinear case and  Giacomoni-Schindler-Tak\'{a}\v{c} \cite{GST}, Bal-Garain \cite{BG} for the quasilinear case respectively. One may also refer to Garain-Mukherjee \cite{G,GM} and Leggat-Miri \cite{Mirifixed}, dos Santos-Figueiredo-Tavares \cite{Santosetal}, Bal-Garain \cite{BGaniso, Ganiso} for the study of singular problems in the context of weighted and anisotropic $p$-Laplace operator respectively. Furthermore, we motivate readers to read Ghergu-R\u{a}dulescu \cite{GRbook}, Oliva-Orsina-Petitta \cite{Oljama, Olsaim, Oljde, OrPet} for an extensive study of singular elliptic problems.
 
As stated earlier, our main concern in this article is to investigate the existence results in the presence of a variable singular exponent $q$. To the best of our knowledge, the first article to deal with such phenomenon was by Carmona-Aparicio \cite{CMP}, where the authors studied the following singular Laplace equation
 \begin{equation}\label{singLap}
 \begin{split}
 -\Delta u=u^{-q(x)}\text{ in }\Om,\,u>0\text{ in }\Om,\,u=0\text{ on }\partial\Om.
 \end{split}
 \end{equation}
In addition to  \eqref{singLap}, for purely singular nonlinearity, Chu-Gao-Gao \cite{CGG} studied a more general class of second order elliptic equation; Miri \cite{Mirivar} studied anisotropic $p$-Laplace equations; Zhang \cite{Zhang} studied $p(x)$-Laplace equation accomodating variable singular exponent. Alves-Santos-Siqueira \cite{Alves20} obtained existence and uniqueness for a more general $p(x)$-Laplace equation. For the existence results of a system of $p(x)$-Laplace equation, we refer Alves-Moussaoui \cite{Alves18}.  In the purturbed case, Byun-Ko \cite{BKcvpde} discussed multiplicity for a class of $p(x)$-Laplace equation. Papageorgiou-Scapellato \cite{PSzamp} obtained existence results for a purely singular $(p(x),q(x))$-Laplace equation. In the nonlocal setting, problem \eqref{singLap} has been investigated recently by Garain-Mukherjee in \cite{GMnonloc}. For an extensive literature of variable exponent problems, interested readers can go through R\u{a}dulescu-Repov\v{s} \cite{RRvar}.
 
In the same context, we believe that Anisotropic $p$-Laplace equations with singular nonlinearities have been very less understood and considered. We refer to the recent articles by Biset-Mebrate-Mohammed \cite{BMM20}, Farkas-Winkert \cite{PF20} and Farkas-Fiscella-Winkert \cite{PF21} in this regard. We seek some ideas  of approximation from Boccardo-Orsina \cite{BocOr} and Carmona-Aparicio \cite{CMP} for the problem \eqref{maineqn}. On the other side, we obtain multiplicity results for the problem \eqref{maineqn1} using the variational approach from Arcoya-Boccardo \cite{arcoya}.

\textbf{Notation:} We gather below all the notations that has been frequently used in our article-
\begin{itemize}
\item $X:=W_0^{1,p}(\Omega)$.
\item For $u\in X$, we denote by $\|u\|$ to mean $\|u\|_X$ which is defined by \eqref{equinorm}.
\item For a given constant $c$ and a set $S$, by $u\geq c$ in $S$, we mean $u\geq c$ almost everywhere in $S$. Moreover, we write $|S|$ to denote the Lebesgue measure of $S$.
\item $\langle,\rangle$ denotes the standard inner product in $\mathbb{R}^N$.
\item The conjugate exponent of $\theta>1$ is given by $\theta':=\frac{\theta}{\theta-1}$.
\item For $1<p<N$, we denote by $p^*:=\frac{Np}{N-p}$ to mean the critical Sobolev exponent.
\item For $a\in\mathbb{R}$, we denote by $a^+:=\max\{a,0\}$ and $a_-:=\min\{a,0\}$.
\item We write by $c,C$ or $C_i$ for $i\in\mathbb{N}$ to mean a positive constant which may vary from line to line or even in the same line. If a constant $C$ depends on $r_1,r_2,\ldots$, we denote it by $C(r_1,r_2,\ldots)$.
\end{itemize}
Now, we define the notion of weak solutions as follows.\\
\textbf{Weak solution:} We say that $u\in X$ is a weak solution of the problem \eqref{maineqn}, if $u>0$ in $\Omega$ and for every $\omega\Subset\Om$, there exists a positive constant $c_{\omega}$ such that $u\geq c_{\omega}>0$ in $\omega$, satisfying
\begin{equation}\label{psingtest}
\int_{\Om}H(\nabla u)^{p-1}\nabla_{\eta}H(\nabla u)\nabla\phi\,dx=\int_{\Om}\frac{f(x)}{u^{q(x)}}\phi\,dx,\quad\forall\,\phi\in C_c^{1}(\Omega).
\end{equation}
Analogously, we say that $u\in X$ is a weak solution of the problem \eqref{maineqn1}, if $u>0$ in $\Omega$ and for every $\omega\Subset\Om$, there exists a positive constant $c_{\omega}$ such that $u\geq c_{\omega}>0$ in $\omega$, satisfying
\begin{equation}\label{pursingtest}
\int_{\Om}H(\nabla u)^{p-1}\nabla_{\eta}H(\nabla u)\nabla\phi\,dx=\int_{\Om}\Big(\frac{\lambda}{u^{q(x)}}+u^r\Big)\phi\,dx,\quad\forall\,\phi\in C_c^{1}(\Omega).
\end{equation}

Our main results in this article reads includes the following.
\begin{Theorem}\label{thm1}
Let $2\leq p<\infty$ and $q\in C(\overline{\Omega})$ be positive.
\begin{enumerate}
\item[(a)] \textbf{Uniqueness:} Then for any nonnegative $f\in L^1(\Omega)\setminus\{0\}$, the problem \eqref{maineqn} admits at most one weak solution in $X$.
\item[(b)] \textbf{Existence:} Assume that there exists a constant $\delta>0$ such that $0<q(x)\leq 1$ in $\Om_{\delta}$, where
$$
\Om_{\delta}:=\{x\in\Om:\text{dist}\,(x,\partial\Om)<\delta\}.
$$
Then the problem $(\ref{maineqn})$ admits a unique weak solution $u\in X$, for any nonnegative $f\in L^m(\Om)\setminus\{0\}$, where
\[
m=
\begin{cases}
(p^*)',\text{ if }2\leq p<N,\\
>1,\text{ if }p=N,\\
1,\text{ if }p>N.
\end{cases}
\]
\end{enumerate}
\end{Theorem}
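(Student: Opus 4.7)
The plan is to establish part (a) by a monotonicity/comparison argument and part (b) by a standard approximation scheme, invoking (a) to deduce uniqueness of the constructed solution.

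For part (a), suppose $u_1, u_2 \in X$ are two weak solutions of \eqref{maineqn}. Since each $u_i$ is bounded away from zero on every $\omega \Subset \Om$, the function $(u_1-u_2)^+$ lies in $X$ and is supported in sets where both $u_i$ are uniformly positive; it can be approximated in $X$-norm by admissible $C_c^1(\Om)$ test functions. Testing both formulations \eqref{psingtest} against this difference and subtracting gives
\begin{equation*}
\int_{\Om}\bigl[H(\nabla u_1)^{p-1}\nabla_\eta H(\nabla u_1)-H(\nabla u_2)^{p-1}\nabla_\eta H(\nabla u_2)\bigr]\cdot\nabla(u_1-u_2)^+\,dx = \int_{\Om} f\bigl(u_1^{-q(x)}-u_2^{-q(x)}\bigr)(u_1-u_2)^+\,dx.
\end{equation*}
By hypothesis (H4), the map $\xi \mapsto H(\xi)^p/p$ is strictly convex for $p \geq 2$, so the left-hand side is nonnegative (indeed, controls $\|\nabla (u_1-u_2)^+\|_p^p$). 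The right-hand side is nonpositive since $t \mapsto t^{-q(x)}$ is decreasing. Hence $(u_1-u_2)^+ \equiv 0$, and by symmetry $u_1 = u_2$.

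For the existence part of (b), the approximation scheme is to let $f_n = \min\{f,n\}$ and solve the regularised, non-singular problem
\begin{equation*}
-\Delta_{H,p}u_n = \frac{f_n(x)}{(u_n^+ + 1/n)^{q(x)}} \text{ in }\Om,\qquad u_n = 0\text{ on }\pa\Om,
\end{equation*}
with $u_n \in X$. Existence follows from Schauder's fixed-point theorem (the right-hand side being bounded for each fixed $n$), positivity from weak comparison, and the monotonicity $u_n \leq u_{n+1}$ as well as a uniform lower bound $u_n \geq u_1 \geq c_\omega > 0$ on $\omega \Subset \Om$ follow by standard comparison arguments. Testing the regularised equation with $u_n$ itself splits the right-hand side into an integral over $\Om_\delta$ and one over $\Om\setminus\Om_\delta$. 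On $\Om\setminus\Om_\delta$ the uniform positivity reduces the singular term to $C\int f u_n$. On $\Om_\delta$, since $0 < q(x)\leq 1$, an elementary inequality $t/(t+s)^{q(x)} \leq 1 + t$ (valid for $t\geq 0$, $s>0$, $q\in(0,1]$) gives $\int_{\Om_\delta}f_n u_n/(u_n+1/n)^{q(x)} \leq \int_{\Om_\delta} f(1+u_n)$. Applying H\"older with exponents $(m,m')$ and Sobolev embedding $X \hookrightarrow L^{m'}(\Om)$, which holds in each of the three ranges of $p$ for the prescribed $m$, yields $\|u_n\|^p \leq C(1 + \|u_n\|)$, hence a uniform $X$-bound.

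Passing to the limit, monotonicity and the $X$-bound give $u_n \nearrow u$ weakly in $X$ and a.e. in $\Om$, with $u \geq u_1 \geq c_\omega > 0$ on compact subsets. Strong convergence of gradients in $L^p_{\loc}$ follows by the standard Minty-type trick, using the monotonicity inequality from (H4). For a fixed $\phi \in C_c^1(\Om)$, the uniform positivity of $u_n$ on $\mathrm{supp}\,\phi$ tames the singularity, so dominated convergence allows passage to the limit in \eqref{psingtest}. This shows $u$ is a weak solution; uniqueness then comes from part (a). The chief obstacle is the uniform $X$-bound: the assumption $q(x)\leq 1$ on $\Om_\delta$ is exactly what is needed to control the singular term near $\pa\Om$, where no pointwise lower bound on $u_n$ is available; in the interior, the uniform positivity of $u_n$ absorbs any size of $q$. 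A secondary subtlety is the justification that $(u_1-u_2)^+$ is a legitimate test function in part (a), which is handled by truncating slightly away from the boundary and invoking the local lower bound.
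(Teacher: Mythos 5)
Your proposal is correct and follows essentially the same route as the paper: uniqueness via testing with $(u_1-u_2)^+$ and the monotonicity of the Finsler vector field (the paper's Lemma \ref{alg}) together with the decrease of $t\mapsto t^{-q(x)}$, and existence via the truncated problems \eqref{mainapprox}, monotonicity and uniform interior positivity of $u_n$, the split of the energy estimate into $\Om_\delta$ (where $q\le 1$ is used) and its complement (where the interior lower bound is used), and a Boccardo--Murat/Minty limit passage. The only loose point is your claim that $(u_1-u_2)^+$ is supported where both solutions are uniformly positive — it need not be, since it can be nonzero near $\partial\Om$ — but the admissibility of $X$-test functions is exactly what the paper's Lemma \ref{testfn} supplies via density of $C_c^1(\Om)$ and the bound $\int_\Om f u^{-q(x)}|\psi|\,dx\le C\|u\|^{p-1}\|\psi\|$, so this does not affect the argument.
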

In particular, we have the following existence and uniqueness result for any $1<p<\infty$.
\begin{Theorem}\label{thm1new}
If
$$
\Delta_{H,p}\equiv\Delta_{p}\text{ or }\mathcal{S}_p,
$$
as given by \eqref{ex}, then Theorem \ref{thm1} holds for any $1<p<\infty$.  
\end{Theorem}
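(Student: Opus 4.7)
The plan is to trace through the proof of Theorem~\ref{thm1} and isolate the single place where the hypothesis $p\geq 2$ is actually used; for the two distinguished operators $\Delta_p$ and $\mathcal{S}_p$ that step admits a replacement valid on the whole range $1<p<\infty$. In the Finsler setting the restriction enters through a monotonicity estimate of the form
\begin{equation*}
\langle H(\nabla u)^{p-1}\nabla_\eta H(\nabla u) - H(\nabla v)^{p-1}\nabla_\eta H(\nabla v),\,\nabla(u-v)\rangle \geq c\,H(\nabla(u-v))^p,
\end{equation*}
which is clean only when $p\geq 2$ and which is used both to close the \emph{a priori} $X$-bound on the approximating sequence $u_n$ (say, solutions of $-\Delta_{H,p}u_n = f_n(x)/(u_n+1/n)^{q(x)}$ with $f_n=\min\{f,n\}$) and to extract the a.e.\ convergence of $\nabla u_n$ needed to pass to the limit.

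For $\Delta_p$ one has the classical Simon inequalities
\begin{equation*}
\langle |a|^{p-2}a-|b|^{p-2}b,\,a-b\rangle \geq c_p\begin{cases}|a-b|^p, & p\geq 2,\\ (|a|+|b|)^{p-2}|a-b|^2, & 1<p<2,\end{cases}
\end{equation*}
and for $\mathcal{S}_p$ the same inequalities hold componentwise; both are valid for every $p\in(1,\infty)$. First I would rerun the uniform $X$-bound for $u_n$ using these, routing the case $1<p<2$ through a H\"older step to convert the weighted quadratic form into a bound on $\int|\nabla u_n|^p$. Next, to obtain $\nabla u_n\to\nabla u$ almost everywhere, I would implement the Boccardo--Murat scheme: test with a truncation of $u_n-u$, use the Simon inequality to control $|\nabla(u_n-u)|$ on the set where the gradients stay bounded, and diagonalise in the truncation level. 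All of the auxiliary tools used in the proof of Theorem~\ref{thm1} --- interior $C^{1,\alpha}$ regularity (Di Benedetto and Tolksdorf for $\Delta_p$, and the analogous results for $\mathcal{S}_p$), V\'azquez's strong maximum principle, and Hopf's lemma --- are available for every $p\in(1,\infty)$ in both settings, so the local lower bound $u\geq c_\omega>0$ on $\omega\Subset\Omega$ transfers without change.

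For the uniqueness part (a), I would repeat the D\'{\i}az--Sa\'a / Picone argument used in the proof of Theorem~\ref{thm1}: given two solutions $u,v\in X$, test the two equations with suitable truncations of $(u^p-v^p)/u^{p-1}-(v^p-u^p)/v^{p-1}$ and invoke Picone's identity, which for $-\Delta_p$ and for $-\mathcal{S}_p$ is classical across the full range $1<p<\infty$. The main obstacle I anticipate is precisely the a.e.\ gradient convergence in the subquadratic regime $1<p<2$: since the Simon inequality there provides only a weighted quadratic control of $|\nabla u_n - \nabla u|$, one must combine it with uniform integrability of $|\nabla u_n|^p$ and a delicate truncation argument to pass from convergence in measure to convergence almost everywhere --- a well-travelled but technical road.
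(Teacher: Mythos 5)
Your proposal is correct and matches the paper's (very terse) argument: the only place $p\geq 2$ enters is the Finsler monotonicity inequality of Lemma \ref{alg}, and for $\Delta_p$ and (componentwise) for $\mathcal{S}_p$ one substitutes the classical algebraic inequality of Lemma \ref{AI}, which covers $1<p<2$ via the weighted quadratic form plus a H\"older step, after which the Schauder fixed point construction, the Boccardo--Murat gradient convergence and the positivity all go through unchanged. The one mild divergence is your Picone/D\'iaz--Sa\'a route for uniqueness: the paper simply tests with $(u-v)^{+}$ and invokes strict monotonicity, which the second branch of Lemma \ref{AI} already supplies for $1<p<2$, so no Picone identity is needed.
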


\begin{Theorem}\label{thm3}
Let $2\leq p<N$ and $q\in C(\overline{\Om})$ be such that $0<q(x)<1$ for all $x\in\overline{\Om}$. Then there exists $\Lambda>0$ such that for all $\la\in(0,\Lambda)$, the problem $\eqref{maineqn1}$ admits at least two distinct weak solutions in $X$, for any $r\in(p-1,p^*-1)$.
\end{Theorem}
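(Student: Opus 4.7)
The approach is the classical two-solution variational strategy: first a local minimizer, then a mountain-pass critical point, in the spirit of Arcoya--Boccardo \cite{arcoya}. Because $0<q(x)<1$ on $\overline{\Om}$ and $p-1<r<p^*-1$, the energy functional
\[
J_\la(u) := \frac{1}{p}\int_\Om H(\na u)^p\,dx - \la\int_\Om \frac{(u^+)^{1-q(x)}}{1-q(x)}\,dx - \frac{1}{r+1}\int_\Om (u^+)^{r+1}\,dx
\]
is continuous on $X$ and Gateaux differentiable on the cone of a.e.-positive functions; its critical points there correspond to weak solutions of \eqref{maineqn1}.

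\textbf{First solution.} Since $r+1>p$ makes $J_\la$ unbounded below, I would truncate the superlinear term, replacing $s^r$ by $\min(s,K)^r$ for some $K>0$, obtaining a modified functional $\widetilde J_\la$. Then $\widetilde J_\la$ is coercive (the truncated superlinear term grows at most linearly, and the singular term grows like $(u^+)^{1-q(x)}$ with $0<1-q(x)<1$, both sub-$p$) and weakly lower semicontinuous, so it attains a minimum at some $u_\la\in X$. Using the unique solution $\underline u_\la$ of the purely singular problem from Theorem \ref{thm1} (which exists since $q(x)<1$ on all of $\overline{\Om}$) as a lower barrier gives $u_\la\geq\underline u_\la>0$ in $\Om$. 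A Moser iteration for $\De_{H,p}$, following the anisotropic regularity theory cited in the introduction, provides a bound $\|u_\la\|_\infty\leq C\la^{\al}$ with $\al>0$; choosing $\Lambda$ so small that $C\la^{\al}\leq K$ for every $\la\in(0,\Lambda)$ switches off the truncation, so $u_\la$ is a genuine weak solution of \eqref{maineqn1} and, via a Brezis--Nirenberg type argument, a local minimizer of the full functional $J_\la$ in $X$.

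\textbf{Second solution.} Fix any nonnegative $\varphi\in X\setminus\{0\}$; since $r+1>p$, $J_\la(u_\la+t\varphi)\to-\infty$ as $t\to+\infty$. Combined with $u_\la$ being a (strict) local minimizer, this yields the mountain-pass geometry. The Palais--Smale condition for $J_\la$ is verified in the subcritical range $r+1<p^*$: the standard Ambrosetti--Rabinowitz device (note $sf(s)=(r+1)F(s)$ with $r+1>p$, balanced against the sub-$p$ singular contribution) gives boundedness of PS sequences, and strong convergence follows from the compactness of $X\hookrightarrow L^{r+1}(\Om)$ together with the $(S_+)$-property of $-\De_{H,p}$. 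The mountain-pass theorem then produces a critical point $u_2$ with $J_\la(u_2)>J_\la(u_\la)$, so $u_2\neq u_\la$; positivity and Hopf-type boundary behavior then ensure $u_2$ is a genuine weak solution.

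\textbf{Principal obstacle.} The most delicate point is promoting $u_\la$ from a minimizer of the truncated $\widetilde J_\la$ to a local minimizer of the untruncated $J_\la$ in the \emph{$X$-topology}. The classical route is a Brezis--Nirenberg / Giacomoni--Schindler--Tak\'a\v c type theorem that upgrades a $C^1$-local minimizer to an $X$-local minimizer; to run this for $\De_{H,p}$ one needs uniform $C^{1,\al}$ regularity up to the boundary for the truncated problem together with a Hopf-type boundary lemma for the anisotropic operator. A secondary technical hurdle is passing to the limit in the singular integrals along PS sequences: here one leverages the Hopf bound $u_\la\geq c\,\Dist(\cdot,\pa\Om)$ and $0<q(x)<1$ to ensure that $(u_\la+v_n)^{1-q(x)}$ is equi-integrable, which in turn justifies the limit in both the singular term and its derivative.
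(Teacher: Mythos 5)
Your overall philosophy (local minimizer plus mountain pass, after Arcoya--Boccardo) matches the paper's, but the route you take has two genuine gaps, and the paper's essential device is missing from your argument. First and most seriously: the functional $J_\la$ containing $\int_\Om\frac{(u^+)^{1-q(x)}}{1-q(x)}\,dx$ is \emph{not} $C^1$ on $X$ (its formal derivative involves $(u^+)^{-q(x)}\phi$, which need not be integrable and is not continuous in $u$ at functions vanishing on sets of positive measure), so neither the mountain-pass theorem nor the Palais--Smale machinery can be applied to it directly; saying it is ``Gateaux differentiable on the cone of a.e.-positive functions'' does not give you a deformation lemma on $X$. The paper circumvents exactly this by regularizing the singular term to $\la(u^++\e)^{-q(x)}$, so that $I_{\la,\e}\in C^1(X,\mathbb{R})$; it then produces \emph{two} critical points of $I_{\la,\e}$ for each $\e$ (a minimizer $\nu_\e$ over the ball $\{\|v\|\le R\}$ with negative energy, and a mountain-pass point $\zeta_\e$ with energy $\ge\rho>0$), proves an $\e$-uniform bound $\|v_\e\|\le\Theta$ and an $\e$-uniform interior lower bound $v_\e\ge\xi$ by comparison with the solution of $-\Delta_{H,p}\xi=C$, passes to the limit $\e\to0^+$ via Boccardo--Murat gradient convergence, and finally uses energy convergence $I_{\la,\e}(v_\e)\to I_\la(v_0)$ to conclude $I_\la(\nu_0)\le 0<\rho\le I_\la(\zeta_0)$, hence $\nu_0\ne\zeta_0$. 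None of these limit arguments appear in your proposal, and without the $\e$-regularization your critical-point theory has no foundation.

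Second, your construction of the first solution is both unsubstantiated and unnecessary. The claimed bound $\|u_\la\|_{L^\infty}\le C\la^\al$ for the minimizer of the truncated functional does not follow: the truncated superlinear term $\min(s,K)^r$ contributes to the equation independently of $\la$, so the minimizer need not shrink as $\la\to0$, and you cannot ``switch off'' the truncation this way. Moreover the Brezis--Nirenberg / $C^1$-versus-$W^{1,p}$ local minimizer theorem, together with uniform $C^{1,\al}$ regularity up to the boundary and a Hopf lemma for $\Delta_{H,p}$, which you correctly flag as the principal obstacle, is never supplied and is not available off the shelf for this operator. The paper avoids all of this: since $\lim_{t\to0^+}I_{\la,\e}(te_1)/t=-\la\int_\Om\e^{-q(x)}e_1\,dx<0$ while $\inf_{\|v\|=R}I_{\la,\e}\ge\rho>0$ for $\la<\Lambda$, the infimum over the closed ball is attained at an interior point, which is automatically a critical point --- no truncation, no regularity theory, no Hopf lemma. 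As written, your proposal cannot be completed without importing the paper's regularization scheme (or an equivalent nonsmooth critical point theory), so the argument has a genuine gap rather than being an alternative proof.
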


In particular, we have the following multiplicity result for any $1<p<N$.
\begin{Theorem}\label{thm3new}
If
$$
\Delta_{H,p}\equiv\Delta_{p}\text{ or }\mathcal{S}_p,
$$
as given by \eqref{ex}, then Theorem \ref{thm3} holds for any $1<p<N$.    
\end{Theorem}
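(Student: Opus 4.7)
The plan is to revisit the proof of Theorem \ref{thm3} and identify exactly where the hypothesis $p\geq 2$ intervenes, then replace each such step by an argument valid for the full range $1<p<\infty$ when $\Delta_{H,p}$ is specialized to $\Delta_p$ or $\mathcal{S}_p$. The overall architecture must stay the same: first produce a local minimizer $u_\la\in X$ of a truncated energy near the origin, then produce a second mountain-pass critical point at a strictly higher energy level, both obtained as limits of classical solutions of a sequence of regularized problems in which the singular term is replaced by $\la(u_n^++\tfrac{1}{n})^{-q(x)}$ and the superlinear term by a bounded truncation. Since $r<p^*-1$ is subcritical and $0<q(x)<1$ throughout $\overline\Om$, the Gateaux differentiability of the energy, the mountain-pass geometry of Ambrosetti-Rabinowitz type, and the existence of a threshold $\La>0$ separating the two regimes are all insensitive to whether $p\geq 2$ or $p<2$; only the passage to the limit in the approximating equations depends on $p$.

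First I would observe that in the proof of Theorem \ref{thm3} the assumption $p\geq 2$ is used exclusively to obtain strong convergence of the gradients $\nabla u_n\to \nabla u$ in $L^p(\Om;\mathbb{R}^N)$ for sequences arising either from the minimizing procedure or from a Palais-Smale sequence for the mountain-pass level. In the Finsler setting this convergence rests on the classical monotonicity bound
\begin{equation*}
\big\langle H(\xi)^{p-1}\nabla_\eta H(\xi)-H(\eta)^{p-1}\nabla_\eta H(\eta),\,\xi-\eta\big\rangle\,\geq\, c\,H(\xi-\eta)^p,
\end{equation*}
which is available only for $p\geq 2$. For $1<p<2$ one has instead a weighted substitute whose form depends on $H$, and it is here that the general proof breaks down.

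Next I would explain why the two distinguished operators sidestep this difficulty. For $\Delta_p$ the $(S_+)$ property is classical for every $1<p<\infty$: if $u_n\rightharpoonup u$ in $X$ and $\limsup_n\langle -\Delta_p u_n,u_n-u\rangle\leq 0$, then $u_n\to u$ strongly in $X$. For $\mathcal{S}_p$ the same $(S_+)$ property is deduced componentwise from the elementary scalar inequality $(|a|^{p-2}a-|b|^{p-2}b)(a-b)\geq c_p|a-b|^2(|a|+|b|)^{p-2}$, combined with Hölder's inequality with exponents $2/p$ and $2/(2-p)$; this argument is well documented in the literature on the pseudo $p$-Laplacian. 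Inserting either statement into the two places where strong convergence of gradients is invoked in the proof of Theorem \ref{thm3} yields the convergence for $1<p<2$ as well, and the remaining estimates (energy comparison, lower bound for the mountain-pass level, uniform $L^\infty$ bound of the approximate solutions via Moser iteration applied to the standard or pseudo $p$-Laplacian) are valid for all $1<p<N$ without change.

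The main obstacle I anticipate is less a single hard step than the need to carefully verify that every compactness argument in the proof of Theorem \ref{thm3}, whether for the truncated energy or for the Palais-Smale sequences obtained by a deformation or Ekeland argument, is of a form to which the $(S_+)$ property can be applied, namely that one can feed $u_n-u$ as test function in the limiting equation for $u_n$. This is possible here because the right-hand sides $\la/u_n^{q(x)}+u_n^r$ in the regularized problems remain uniformly bounded in $L^{p'}(\Om)$ on bounded subsets of $X$ (using $q(x)<1$ for the singular contribution and $r<p^*-1$ together with Sobolev embedding for the superlinear one), so that $\langle -\Delta_p u_n,u_n-u\rangle\to 0$ automatically. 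Once this is in place, the two solutions are constructed exactly as in Theorem \ref{thm3}, completing the proof of Theorem \ref{thm3new}.
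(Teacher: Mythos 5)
Your proposal is correct and follows essentially the same route as the paper: the paper's proof of Theorem \ref{thm3new} simply observes (via Remark \ref{nonnegrmk}) that the only place $p\geq 2$ enters is through the strong monotonicity inequality of Lemma \ref{alg}, and that for $\Delta_p$ and $\mathcal{S}_p$ the classical algebraic inequality of Lemma \ref{AI} supplies the needed monotonicity (and hence the $(S_+)$-type compactness, the comparison argument in Step 1, and the nonnegativity of the critical points) for all $1<p<N$. Your identification of the scalar inequality $(|a|^{p-2}a-|b|^{p-2}b)(a-b)\geq c_p|a-b|^2(|a|+|b|)^{p-2}$ as the substitute for $1<p<2$ is exactly the content of Lemma \ref{AI}, so the argument matches the paper's, modulo some inessential inaccuracies in your description of the structure of the proof of Theorem \ref{thm3} (the paper does not truncate $u^r$ or run a Moser iteration there).
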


We have segmented our paper as follows: In Section $2$, we discuss some auxiliary results in our functional setting. In Section $3$ and Section $4$, we obtain some preliminary results for the proof of Theorem \ref{thm1}, Theorem \ref{thm1new} and Theorem \ref{thm3}, Theorem \ref{thm3new} respectively. Finally, the proof of all main results has been collected in Section $5$.

\section{Auxiliary results}
This section is devoted to preliminary ideas which will be helpful to advance towards establishing our principle results.
We fix our assumption to $1<p<\infty$, unless otherwise mentioned. We state some results below which are taken from Farkas-Winkert \cite[Proposition 2.1]{PF20} and Xia \cite[Proposition 1.2]{Xiathesis}.
\begin{Lemma}\label{Happ}
For every $x\in\mathbb{R}^N\setminus\{0\}$ and $t\in\mathbb{R}\setminus\{0\}$ we have
\begin{enumerate}
\item[(A)] $x\cdot\nabla_{\eta} H(x)=H(x)$. 
\item[(B)] $\nabla_{\eta}H(tx)=\text{sign}(t)\nabla_{\eta}H(x)$.
\item[(C)] $|\nabla_{\eta} H(x)|\leq C$, for some positive constant $C$.
\item[(D)] $H$ is stricly convex.
\end{enumerate}
\end{Lemma}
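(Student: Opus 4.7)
The plan is to extract all four items from the homogeneity axiom (H2), the smoothness (H3), and the positive definiteness condition (H4), using Euler's identity, the chain rule, and a standard normalization to bridge $H^2/2$ and $H$.

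First I would prove (A) by differentiating the $1$-homogeneity relation $H(tx)=tH(x)$ (valid for $t>0$ by (H2)) in $t$ at $t=1$: the chain rule immediately yields $x\cdot\nabla_\eta H(x)=H(x)$. For (B), I would apply the chain rule to $H(tx)=|t|H(x)$ in the variable $x$ with $t\neq 0$ fixed; the two sides become $t\,\nabla_\eta H(tx)$ and $|t|\,\nabla_\eta H(x)$, so dividing by $t$ yields the claim. Item (C) is then immediate from (B): the gradient $\nabla_\eta H$ is $0$-homogeneous with respect to positive dilations, so $\nabla_\eta H(x)=\nabla_\eta H(x/|x|)$ for every $x\neq 0$, and by (H3) the map $\nabla_\eta H$ is continuous on the compact Euclidean sphere $\{|y|=1\}$, hence bounded.

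The delicate step is (D). From (H4), $H^2/2$ has everywhere positive definite Hessian on $\R^N\setminus\{0\}$, so $H^2/2$ is strictly convex on every open convex subset of $\R^N\setminus\{0\}$. To pass to strict convexity of $H$, I would pick $x,y\in\R^N\setminus\{0\}$ that are not positively proportional and $t\in(0,1)$. In the case where $x$ and $y$ are linearly independent I would normalize by setting $\tilde x=x/H(x)$, $\tilde y=y/H(y)$; the segment $[\tilde x,\tilde y]$ then avoids the origin, so strict convexity of $H^2/2$ on unit vectors gives $H(s\tilde x+(1-s)\tilde y)<1$ for $s\in(0,1)$, and the $1$-homogeneity of $H$ reassembles this into the strict inequality along $[x,y]$. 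The remaining case, when $y=\mu x$ with $\mu<0$, is a direct computation using $H(\alpha x)=|\alpha|H(x)$.

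The main obstacle I anticipate is precisely (D), since (H4) is phrased for $H^2/2$ and not for $H$ itself; the $1$-homogeneity is what lets the normalization trick work, but one must be careful because convex combinations of antipodal unit vectors can pass through the origin, which is why I isolate the anti-parallel case. Everything else is routine bookkeeping with homogeneity and continuity on compacts.
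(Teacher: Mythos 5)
Your argument is correct, but note that the paper does not actually prove Lemma \ref{Happ}: it imports all four statements from Farkas--Winkert \cite{PF20} and Xia \cite{Xiathesis}, so there is no in-paper proof to compare against, and what you have written is a self-contained substitute for that citation. Your derivations of (A)--(C) from Euler's identity, the chain rule applied to $H(tx)=|t|H(x)$, and the resulting $0$-homogeneity of $\nabla_\eta H$ under positive dilations (plus continuity on the compact unit sphere, via (H3)) are the standard arguments and are sound. The one point that deserves emphasis is (D): as you implicitly recognize, a positively $1$-homogeneous function is never strictly convex in the literal sense, since $H(sx+(1-s)\lambda x)=sH(x)+(1-s)H(\lambda x)$ for every $\lambda>0$, so the statement must be read as strict convexity off positively proportional pairs (equivalently, strict convexity of the unit ball $\{H\le 1\}$), which is exactly the version you prove and the version the cited sources intend. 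Your normalization $\tilde x=x/H(x)$, $\tilde y=y/H(y)$, the observation that the segment $[\tilde x,\tilde y]$ avoids the origin when $x,y$ are linearly independent (so that (H4) yields strict convexity of $H^2/2$ on a convex tube around it), the reassembly $H(tx+(1-t)y)=\sigma H\bigl(\tfrac{\alpha}{\sigma}\tilde x+\tfrac{\beta}{\sigma}\tilde y\bigr)<\sigma=tH(x)+(1-t)H(y)$ with $\alpha=tH(x)$, $\beta=(1-t)H(y)$, and the separate direct computation in the anti-parallel case all check out. In short, you supply an actual proof where the paper relies on a reference, and in doing so you correctly repair the slightly imprecise phrasing of item (D).
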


\begin{Corollary}\label{regrmk}
On account of  Lemma \ref{Happ} and Remark \ref{hypormk2}, we have the following relations-
\begin{equation}\label{lbd}
H(x)^{p-1}\nabla_{\eta}H(x)\cdot x=H(x)^p\geq C_1|x|^p,\quad\forall\,x\in\mathbb{R}^N,
\end{equation}
\begin{equation}\label{ubd}
\big|H(x)^{p-1}\nabla_{\eta}H(x)\big|\leq C_2|x|^{p-1},\quad\forall\,x\in\mathbb{R}^N\text{ and}
\end{equation}
\begin{equation}\label{homo}
H(tx)^{p-1}\nabla_{\eta}H(tx)=|t|^{p-2}t H(x)^{p-1}\nabla_{\eta}H(x),\quad\forall\,x\in\mathbb{R}^N\text{ and }t\in\mathbb{R}\setminus\{0\}.
\end{equation}
\end{Corollary}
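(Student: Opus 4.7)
The plan is to verify the three identities separately by combining the structural properties of $H$ collected in Lemma \ref{Happ} with the norm-equivalence bounds from Remark \ref{hypormk2}. All manipulations will be carried out for $x\in\mathbb{R}^N\setminus\{0\}$, where $\nabla_{\eta} H$ is defined by (H3), and the cases $x=0$ (and $tx=0$) will be handled at the end by observing that $H(0)=0$ by (H1), which makes every displayed quantity vanish consistently.

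For \eqref{lbd}, the idea is to multiply Lemma \ref{Happ}(A), namely $x\cdot\nabla_{\eta} H(x)=H(x)$, by the scalar $H(x)^{p-1}$ to obtain the equality $H(x)^{p-1}\nabla_{\eta} H(x)\cdot x=H(x)^p$. Then the lower estimate $H(x)\geq C_1|x|$ from Remark \ref{hypormk2}, raised to the $p$-th power, yields the desired inequality with constant $C_1^p$, which I will relabel as $C_1$ to match the statement.

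For \eqref{ubd}, I would apply the triangle-style estimate $|H(x)^{p-1}\nabla_{\eta} H(x)|\leq H(x)^{p-1}|\nabla_{\eta} H(x)|$, then use the boundedness $|\nabla_{\eta} H(x)|\leq C$ from Lemma \ref{Happ}(C) together with the upper bound $H(x)\leq C_2|x|$ from Remark \ref{hypormk2} to conclude. The resulting constant is a product of $C$ and $C_2^{p-1}$, which I will again call $C_2$.

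For \eqref{homo}, the strategy is purely algebraic: the $1$-homogeneity of $H$ from (H2) gives $H(tx)^{p-1}=|t|^{p-1}H(x)^{p-1}$, while Lemma \ref{Happ}(B) gives $\nabla_{\eta} H(tx)=\mathrm{sign}(t)\,\nabla_{\eta} H(x)$ for $t\neq 0$. Multiplying these yields the factor $|t|^{p-1}\,\mathrm{sign}(t)$, which I will rewrite as $|t|^{p-2}t$ (using $|t|\,\mathrm{sign}(t)=t$) to match the stated form. There is no genuine obstacle here: the corollary is essentially a bookkeeping consequence of the lemma and the norm equivalence, and the only mild subtlety is the need to exclude $x=0$ (resp.\ $tx=0$) at the intermediate step before reinserting it by continuity/triviality.
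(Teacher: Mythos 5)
Your proposal is correct and follows exactly the route the paper intends: the paper gives no written proof for this corollary, simply asserting it ``on account of'' Lemma \ref{Happ} and Remark \ref{hypormk2}, and the computations you spell out (multiplying (A) by $H(x)^{p-1}$, combining (C) with $H(x)\le C_2|x|$, and multiplying the homogeneity of $H^{p-1}$ with (B) to get $|t|^{p-1}\mathrm{sign}(t)=|t|^{p-2}t$) are precisely the intended bookkeeping. Your explicit treatment of the $x=0$ case, where $H(0)^{p-1}=0$ makes every expression vanish, is a small but welcome addition of care beyond what the paper records.
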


The following result has been adapted from Ohta \cite[Proposition 4.6]{Ohta}.
\begin{Lemma}\label{Ohta}
For every $x,y\in\mathbb{R}^N$, there exists a constant $C\geq 1$ such that
\begin{equation}\label{Ohtalem}
H\left(\frac{x+y}{2}\right)^2+\frac{1}{4C^2}H(x-y)^2\leq \frac{H(x)^2+H(y)^2}{2}.
\end{equation}
\end{Lemma}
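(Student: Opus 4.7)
The plan is to view the inequality as a strong convexity estimate for $F := H^2/2$, which by (H2)--(H4) is $2$-homogeneous, $C^\infty$ on $\mathbb{R}^N\setminus\{0\}$, and has positive definite Hessian there. Dividing the target inequality by $2$, it is equivalent to
\begin{equation*}
\frac{F(x)+F(y)}{2} - F\!\left(\frac{x+y}{2}\right) \;\geq\; \frac{1}{4C^2}\, F(x-y).
\end{equation*}

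The first step is a uniform lower bound on $\nabla_{\eta}^2 F$. Since $F$ is $2$-homogeneous, $\nabla_{\eta}^2 F$ is $0$-homogeneous, i.e.\ constant along rays from the origin. Continuity from (H3), positive definiteness from (H4), and compactness of the Euclidean unit sphere then produce a constant $\kappa>0$ such that $\langle \nabla_{\eta}^2 F(\xi) v, v\rangle \geq \kappa |v|^2$ for every $\xi \neq 0$ and every $v \in \mathbb{R}^N$.

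Next, for $(x,y)$ such that $0 \notin [x,y]$ the function $F$ is $C^2$ along the segment. Setting $z := \frac{x+y}{2}$ and $h := \frac{x-y}{2}$, and applying Taylor's formula with integral remainder to $F(z \pm h)$ about $z$, the first-order terms cancel on addition, and the Hessian bound gives
\begin{equation*}
F(x)+F(y)-2F(z) \;=\; \int_0^1 (1-s)\bigl[\langle \nabla_{\eta}^2 F(z+sh)h,h\rangle + \langle \nabla_{\eta}^2 F(z-sh)h,h\rangle\bigr]\,ds \;\geq\; \frac{\kappa}{4}|x-y|^2.
\end{equation*}
Combining with $H(\xi)^2 \leq C_2^2 |\xi|^2$ from Remark~\ref{hypormk2} yields the target inequality for every $C \geq C_2/\sqrt{\kappa}$.

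The remaining degenerate case is $0 \in [x,y]$, i.e.\ (up to swapping) $x = 0$ or $y = -tx$ with $t \geq 0$; here the Taylor argument breaks because $\nabla_{\eta}^2 F$ is undefined at the origin, so I would handle it by an explicit scaling calculation. Using $H(sx)=|s|H(x)$ from (H2), the second sub-case reduces to $\frac{(1-t)^2}{4} + \frac{(1+t)^2}{4C^2} \leq \frac{1+t^2}{2}$, which simplifies to $\frac{(1+t)^2}{4C^2} \leq \frac{(1+t)^2}{4}$ and so holds precisely when $C \geq 1$; the sub-case $x=0$ is analogous. Choosing $C := \max\{1,\, C_2/\sqrt{\kappa}\}$ then works in all cases. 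The main obstacle is really this bifurcation: the Taylor estimate handles the generic case, but collinear opposite-sign configurations fall outside its scope because the segment crosses the origin, and must be treated separately via the homogeneity of $H$.
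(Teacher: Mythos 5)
Your proof is correct. Note that the paper does not actually prove Lemma \ref{Ohta}; it imports the inequality wholesale from Ohta \cite[Proposition 4.6]{Ohta}, where it appears as the $2$-uniform convexity of the Minkowski norm. Your argument supplies a self-contained derivation from the axioms (H2)--(H4) alone, in the same spirit as the cited source: a uniform positive lower bound $\kappa$ on the Hessian of $F=H^2/2$ (legitimate, since $\nabla_\eta^2F$ is $0$-homogeneous, continuous, and positive definite on the compact unit sphere) feeds into a second-order Taylor expansion about the midpoint, and the comparison $H(\xi)^2\le C_2^2|\xi|^2$ from Remark \ref{hypormk2} converts the Euclidean modulus of convexity into the Finsler one with $C\ge C_2/\sqrt{\kappa}$. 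The point where care is genuinely required --- and which you correctly isolate --- is the collinear case $0\in[x,y]$, where the segment passes through the singularity of $\nabla_\eta^2 F$ and the Taylor argument is unavailable; your explicit reduction via (H2) to $(1+t)^2/(4C^2)\le(1+t)^2/4$ closes that case and simultaneously explains why the lemma cannot hold with any $C<1$. Two minor remarks: the quantifiers in the statement should be read as ``there exists $C\ge 1$ such that for all $x,y$'' (your proof delivers exactly this uniform version, which is what is needed later, e.g.\ in Lemma \ref{algpre}), and taking $C=\max\{1,\,C_2/\sqrt{\kappa}\}$ is harmless since enlarging $C$ only weakens \eqref{Ohtalem}.
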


Lemma \ref{algpre}-\ref{alg} given below follows from arguments given in the proof of Xia \cite[Lemma $3.1-3.2$]{Xia}.
\begin{Lemma}\label{algpre}
Let $2\leq p<\infty$. Then, for every $x,y\in\mathbb{R}^N$,
\begin{equation}\label{algpre1}
H(x)^p\geq H(y)^p+pH(y)^{p-1}\nabla_{\eta}H(y)(x-y).
\end{equation}
Moreover, there exists a positive constant $c=c(C,p)$, where $C$ is given by Lemma \ref{Ohta} such that for every $x,y\in\mathbb{R}^N$,
\begin{equation}\label{algpre2}
\begin{split}
H(x)^p&\geq
H(y)^p+pH(y)^{p-1}\nabla_{\eta}H(y)(x-y)+cH(x-y)^p.
\end{split}
\end{equation}
\end{Lemma}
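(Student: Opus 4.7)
The first inequality~(\ref{algpre1}) is the tangent (subgradient) inequality for the convex function $H^p$. Since $H$ is convex by Lemma~\ref{Happ}(D) and nonnegative, and the map $t\mapsto t^p$ is convex and nondecreasing on $[0,\infty)$ for $p\geq 2$, the composite $H^p$ is convex. For $y\neq 0$, the chain rule (via (H3)) gives $\nabla(H^p)(y)=p\,H(y)^{p-1}\nabla_\eta H(y)$, and~(\ref{algpre1}) is just the standard gradient inequality. For $y=0$, the product $H(y)^{p-1}\nabla_\eta H(y)$ extends continuously by $0$ (using $|\nabla_\eta H|\leq C$ from Lemma~\ref{Happ}(C)), so~(\ref{algpre1}) reduces to $H(x)^p\geq 0$.

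For~(\ref{algpre2}) the plan is to first lift the $2$-uniform convexity of $H$ in Lemma~\ref{Ohta} to a $p$-midpoint convexity of $H^p$. I raise both sides of~(\ref{Ohtalem}) to the $p/2$-th power, which is legitimate since $t\mapsto t^{p/2}$ is increasing for $p\geq 2$. To the left-hand side I then apply the superadditivity $(a+b)^{p/2}\geq a^{p/2}+b^{p/2}$ (valid for $a,b\geq 0$ whenever $p/2\geq 1$), and to the right-hand side the Jensen inequality $((a+b)/2)^{p/2}\leq (a^{p/2}+b^{p/2})/2$. This produces
\[
H\!\left(\tfrac{x+y}{2}\right)^{p} + c_1\, H(x-y)^p \;\leq\; \frac{H(x)^p+H(y)^p}{2},
\]
with $c_1=(4C^2)^{-p/2}$.

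To upgrade this midpoint form to the tangent inequality~(\ref{algpre2}), I would use a standard minimum-of-a-linearly-perturbed-convex-function trick. Fix $y$ and set $\phi(x):=H(x)^p-p\,H(y)^{p-1}\nabla_\eta H(y)\cdot x$. Then $\phi$ is convex with $\nabla\phi(y)=0$, so $y$ is a global minimizer and $\phi(z)\geq\phi(y)$ for every $z\in\mathbb{R}^N$. Since adding a linear term preserves the midpoint convexity just established, applying the previous display to $\phi$ yields
\[
\phi(x) \;\geq\; 2\phi\big(\tfrac{x+y}{2}\big)-\phi(y)+2c_1\, H(x-y)^p \;\geq\; \phi(y)+2c_1\, H(x-y)^p.
\]
Computing $\phi(y)=(1-p)H(y)^p$ via Lemma~\ref{Happ}(A) (i.e.\ $y\cdot\nabla_\eta H(y)=H(y)$) and rearranging recovers~(\ref{algpre2}) with $c=2c_1$.

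The delicate step is the lifting from the $H^2$ estimate of Lemma~\ref{Ohta} to an $H^p$ estimate; the conversion from midpoint convexity to the tangent form is then routine convex analysis. This lifting uses $p\geq 2$ in two places simultaneously (the exponent $p/2$ must be both monotone increasing and superadditive), which matches precisely the hypothesis under which the lemma is stated.
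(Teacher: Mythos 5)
Your proposal is correct and follows essentially the same route as the paper: convexity of $H^p$ for \eqref{algpre1}, then lifting Lemma \ref{Ohta} to the $p$-th power via the inequality $a^{p/2}+b^{p/2}\leq(a+b)^{p/2}$ (the paper's $(a^p+b^p)\leq(a^2+b^2)^{p/2}$ in disguise) together with convexity of $t\mapsto t^{p/2}$. Your perturbed-functional step $\phi\big(\tfrac{x+y}{2}\big)\geq\phi(y)$ is exactly the paper's application of \eqref{algpre1} at the midpoint, so the two arguments coincide up to presentation and yield the same constant.
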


\begin{proof}
Firstly, we observe that $H^p$ is convex. Indeed, for any $x,y\in\mathbb{R}^N$ and $s\in[0,1]$, by the convexity of $H$ (follows from Lemma \ref{Happ}), we have
\begin{equation*}\label{con}
\begin{split}
H(sx+(1-s)y)^p&\leq\left(sH(x)+(1-s)H(y)\right)^p\\
&\leq sH(x)^p+(1-s)H(y)^p,
\end{split}
\end{equation*}
where in the final step above, we have used the convexity of $|\cdot|^p$. Hence the estimate \eqref{algpre1} follows. Next we prove \eqref{algpre2}. For any $a,b\geq 0$ and $p\geq 2$, we have the following elementary inequality (see Xia \cite{Xia})
\begin{equation}\label{useineq}
(a^p+b^p)\leq(a^2+b^2)^\frac{p}{2}.
\end{equation}
Let $C\geq 1$ be given by Lemma \ref{Ohta}. Choosing $a=H\big(\frac{x+y}{2}\big)$ and $b=H\big(\frac{x-y}{2C}\big)$ in \eqref{useineq} and using Lemma \ref{Ohta} we get
\begin{equation}\label{pgeq2}
\begin{split}
H\left(\frac{x+y}{2}\right)^p+H\left(\frac{x-y}{2C}\right)^p&\leq\left(\frac{H(x+y)^2}{4}+\frac{1}{4C^2}H(x-y)^2)\right)^\frac{p}{2}\\
&\leq\left(\frac{H(x)^2+H(y)^2}{2}\right)^\frac{p}{2}\leq\frac{H(x)^p}{2}+\frac{H(y)^p}{2},
\end{split}
\end{equation}
where in the last line above we have used the convexity of $|\cdot|^\frac{p}{2}$ for $p\geq 2$.
Moreover, by \eqref{algpre1}, we have
\begin{equation}\label{pgeq2app}
H\left(\frac{x+y}{2}\right)^p\geq H(y)^p+\frac{p}{2}H(y)^{p-1}\nabla_{\eta}H(y)(x-y).
\end{equation}
Hence using \eqref{pgeq2app} in \eqref{pgeq2} the estimate \eqref{algpre2} follows.
\end{proof}

Our next Lemma is a well know identity for the $p$-Laplacian operators, we prove it for Finsler $p$-Laplacian (a general case).
\begin{Lemma}\label{alg}
Let $2\leq p<\infty$. Then, for every $x,y\in\mathbb{R}^N$, there exists a positive constant $c=c(C,p)$, where $C$ is given by Lemma \ref{Ohta} such that
\begin{equation}\label{algineq}
\begin{split}
\langle H(x)^{p-1}\nabla_{\eta}H(x)-H(y)^{p-1}\nabla_{\eta}H(y),x-y\rangle&\geq
cH(x-y)^p.
\end{split}
\end{equation}
\end{Lemma}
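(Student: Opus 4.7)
\textbf{Proof proposal for Lemma \ref{alg}.} The plan is to derive the monotonicity inequality \eqref{algineq} as a direct symmetrization of the strong convexity estimate \eqref{algpre2} already established in Lemma \ref{algpre}. This is the standard ``add the two expansions'' trick that works for $|\cdot|^p$ in the classical $p$-Laplacian setting, and it transfers without difficulty to $H$ because $H$ satisfies the same sort of quantitative convexity, encoded in \eqref{algpre2}.

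First, I apply \eqref{algpre2} directly to the pair $(x,y)$, obtaining
\begin{equation*}
H(x)^p \;\geq\; H(y)^p + p\,H(y)^{p-1}\nabla_{\eta}H(y)(x-y) + c\,H(x-y)^p.
\end{equation*}
Next, I apply \eqref{algpre2} to the swapped pair $(y,x)$, which gives
\begin{equation*}
H(y)^p \;\geq\; H(x)^p + p\,H(x)^{p-1}\nabla_{\eta}H(x)(y-x) + c\,H(y-x)^p.
\end{equation*}
By the homogeneity property (H2), $H(y-x) = H(-(x-y)) = H(x-y)$, so the two ``remainder'' terms $c H(x-y)^p$ and $c H(y-x)^p$ coincide.

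Adding the two inequalities and cancelling the common $H(x)^p + H(y)^p$ on both sides yields
\begin{equation*}
0 \;\geq\; p\,\langle H(y)^{p-1}\nabla_{\eta}H(y) - H(x)^{p-1}\nabla_{\eta}H(x),\, x-y\rangle + 2c\,H(x-y)^p,
\end{equation*}
which, after rearranging, is exactly \eqref{algineq} with constant $2c/p$ (absorbing the factor into the renamed $c=c(C,p)$). No further computation is needed once \eqref{algpre2} is in hand.

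I do not anticipate any real obstacle here, since Lemma \ref{algpre} has already done the hard analytic work (the strong convexity of $H^p$ coming from the parallelogram-type estimate \eqref{Ohtalem} of Ohta). The only minor care point is the degeneracy of $\nabla_\eta H$ at the origin: $H \in C^\infty(\mathbb{R}^N\setminus\{0\})$ only, but the product $H(\cdot)^{p-1}\nabla_\eta H(\cdot)$ extends continuously to $0$ by \eqref{ubd} since $p \geq 2$, and \eqref{algpre2} is already stated for all $x,y \in \mathbb{R}^N$, so plugging in is legitimate in every case. Hence the lemma follows in one short step from Lemma \ref{algpre}.
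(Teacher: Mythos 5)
Your proposal is correct and follows exactly the paper's own argument: apply \eqref{algpre2} to the pairs $(x,y)$ and $(y,x)$, add, and use (H2) to identify $H(y-x)$ with $H(x-y)$. The bookkeeping of the constant (obtaining $2c/p$) and the remark about continuity of $H(\cdot)^{p-1}\nabla_{\eta}H(\cdot)$ at the origin are both fine and consistent with the paper.
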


\begin{proof}
From \eqref{algpre2} in Lemma \ref{algpre}, for every $x,y\in\mathbb{R}^N$ we have
\begin{equation}\label{alg1}
\begin{split}
H(x)^p&\geq H(y)^p+pH(y)^{p-1}\nabla_{\eta}H(y)(x-y)+cH(x-y)^p,
\end{split}
\end{equation}
and
\begin{equation}\label{alg2}
\begin{split}
H(y)^p&\geq H(x)^p+pH(x)^{p-1}\nabla_{\eta}H(x)(y-x)+cH(y-x)^p.
\end{split}
\end{equation}
Adding \eqref{alg1}, \eqref{alg2} and using $(H2)$ the estimate \eqref{algineq} follows.
\end{proof}

\begin{Remark}\label{algrmk}
When $2\leq p<\infty$ and $H(x)=H_2(x)=|x|$ as given by \eqref{ex1}, Lemma \ref{alg} coincides with the well-known algebraic inequality from Lemma \ref{AI}.
\end{Remark}

\begin{Corollary}\label{regapp}
As a consequence of Lemma \ref{alg}, we have
\begin{equation}\label{regapp1}
\langle H(x)^{p-1}\nabla_{\eta}H(x)-H(y)^{p-1}\nabla_{\eta}H(y),x-y\rangle>0,\quad\forall\,x\neq y\in\mathbb{R}^N.
\end{equation}
\end{Corollary}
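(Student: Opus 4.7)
The statement is an immediate strict‑positivity consequence of Lemma \ref{alg}, so the plan is a very short one. My approach is to simply apply Lemma \ref{alg} to an arbitrary pair $x,y\in\mathbb{R}^N$ with $x\neq y$, which yields
\[
\langle H(x)^{p-1}\nabla_{\eta}H(x)-H(y)^{p-1}\nabla_{\eta}H(y),\,x-y\rangle \;\geq\; c\,H(x-y)^p
\]
for some constant $c=c(C,p)>0$ depending only on $p$ and on the constant $C$ from Lemma \ref{Ohta}. It then remains to show that the right-hand side is strictly positive.

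For this, I will invoke the assumptions on the Finsler--Minkowski norm $H$. Since $x\neq y$, the vector $z:=x-y$ is nonzero, so by hypothesis (H1) we have $H(z)\neq 0$, and by (H0) we have $H(z)\geq 0$; combining these gives $H(x-y)>0$. Raising to the $p$-th power (with $p\geq 2$) preserves strict positivity, so $H(x-y)^p>0$, and multiplying by $c>0$ gives $c\,H(x-y)^p>0$. Chaining this with the inequality from Lemma \ref{alg} produces the strict inequality \eqref{regapp1}.

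There is no real obstacle to overcome here: Lemma \ref{alg} already furnishes the quantitative lower bound, and the hypotheses (H0)--(H1) were tailor‑made to guarantee that $H$ vanishes only at the origin. The only mild subtlety worth flagging in the write‑up is that $\nabla_\eta H$ is a priori only defined on $\mathbb{R}^N\setminus\{0\}$ by (H3), but the full product $H(x)^{p-1}\nabla_\eta H(x)$ extends continuously to $x=0$ by the bound \eqref{ubd} (it is simply set to $0$ there), so the inner product on the left of \eqref{regapp1} is unambiguously defined for every pair $x,y\in\mathbb{R}^N$. With this convention in place, the corollary follows in one line from Lemma \ref{alg} together with (H1).
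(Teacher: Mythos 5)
Your proposal is correct and matches the paper's (implicit) argument exactly: the corollary is stated there as an immediate consequence of Lemma \ref{alg} combined with the fact that $H(x-y)>0$ for $x\neq y$ by (H0)--(H1). Your additional remark on extending $H(x)^{p-1}\nabla_\eta H(x)$ by $0$ at the origin is a sensible clarification but not a departure from the paper's route.
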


The following inequalities has been borrowed from\cite[Proposition $1.3$ and $(1.2)$]{Xiathesis}.
\begin{Lemma}\label{dualthm}
\begin{enumerate}
\item[(D1)] Cauchy-Schwartz inequality: \begin{equation}\label{CSineq}
\langle x,\xi\rangle\leq H(x)H_0(\xi),\quad\forall\,x,\xi\in\mathbb{R}^N.
\end{equation}
\item[(D2)] For all $x\in\mathbb{R}^N\setminus\{0\}$, it holds 
\begin{equation}\label{dual1}
H_0\big(\nabla H(x)\big)=1.
\end{equation}
\end{enumerate}
\end{Lemma}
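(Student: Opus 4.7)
\textbf{Proof proposal for Lemma \ref{dualthm}.}

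The plan is to derive both identities directly from the definition \eqref{dual} of the dual $H_0$, together with the $1$-homogeneity (H2) and convexity (which follows from (H4) or equivalently Lemma \ref{Happ} (D)) of $H$.

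For (D1), the Cauchy--Schwartz-type inequality is a tautology from the definition of $H_0$. Indeed, for any fixed $\xi \in \mathbb{R}^N$ and any $x \in \mathbb{R}^N \setminus \{0\}$, the supremum in \eqref{dual} gives
\[
\frac{\langle x,\xi\rangle}{H(x)} \leq \sup_{z\neq 0} \frac{\langle z,\xi\rangle}{H(z)} = H_0(\xi),
\]
so that $\langle x,\xi\rangle \leq H(x) H_0(\xi)$. The case $x=0$ is trivial since both sides vanish.

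For (D2), I would establish the two inequalities $H_0(\nabla H(x)) \geq 1$ and $H_0(\nabla H(x)) \leq 1$ separately. The lower bound uses Euler's identity for positively $1$-homogeneous functions: by Lemma \ref{Happ}(A) one has $\langle x, \nabla_\eta H(x)\rangle = H(x)$ for $x \neq 0$, and then (D1) applied to the pair $(x, \nabla H(x))$ yields
\[
H(x) = \langle x, \nabla H(x)\rangle \leq H(x)\, H_0(\nabla H(x)),
\]
which (since $H(x) > 0$ by (H1)) gives $H_0(\nabla H(x)) \geq 1$. For the reverse inequality I would invoke the subgradient inequality coming from convexity of $H$: for every $y \in \mathbb{R}^N$,
\[
H(y) \geq H(x) + \langle \nabla H(x), y - x\rangle = H(x) + \langle \nabla H(x), y\rangle - H(x) = \langle \nabla H(x), y\rangle,
\]
where Euler's identity was used once more to simplify the $-\langle \nabla H(x), x\rangle$ term. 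Dividing by $H(y) > 0$ and taking the supremum over $y \neq 0$ shows $H_0(\nabla H(x)) \leq 1$.

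The only delicate point is justifying the subgradient inequality in the form above; since $H$ is smooth away from the origin by (H3) and convex by Lemma \ref{Happ}(D), this is just the standard first-order characterization of convexity and is routine. Combining the two bounds gives $H_0(\nabla H(x)) = 1$, completing the proof.
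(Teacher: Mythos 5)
Your proof is correct, but note that the paper does not prove this lemma at all: it simply imports both statements from Xia's thesis (Proposition 1.3 and (1.2) there), so there is no internal argument to compare against. Your self-contained derivation is a reasonable substitute and uses only ingredients already available in the paper: (D1) is indeed immediate from the definition \eqref{dual} of $H_0$ after handling $x=0$ separately; the lower bound $H_0(\nabla H(x))\geq 1$ follows from Euler's identity in Lemma \ref{Happ}(A) combined with (D1); and the upper bound follows from the first-order convexity inequality $H(y)\geq H(x)+\langle\nabla H(x),y-x\rangle$, which is legitimate since $H$ is convex by Lemma \ref{Happ}(D) and smooth away from the origin by (H3), with Euler's identity again cancelling the $\langle\nabla H(x),x\rangle$ term. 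One small point of care: Lemma \ref{Happ}(D) asserts \emph{strict} convexity, which cannot hold along rays through the origin for a $1$-homogeneous function; what your argument actually needs is plain convexity, which does hold (it is equivalent to the triangle inequality for the norm $H$), so the proof goes through. The trade-off is that the citation route is shorter, while your argument makes the lemma independent of the external reference.
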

 Below is an important output of above inequalities.
\begin{Corollary}\label{dualthmcor}
For any $x,y \in \mb R^N$, it holds that
\begin{equation}\label{dualthmapp3}
\langle H(x)^{p-1}\nabla_{\eta}H(x)-H(y)^{p-1}\nabla_{\eta}H(y),x-y\rangle\geq \left(H(x)^{p-1}-H(y)^{p-1}\big)\big(H(x)-H(y)\right).
\end{equation}
\end{Corollary}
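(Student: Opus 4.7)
The plan is to expand the inner product on the left side into four terms, evaluate two of them exactly via the Euler identity from Lemma \ref{Happ}(A), and bound the two cross terms from above using the anisotropic Cauchy--Schwarz inequality together with the dual identity $H_0(\nabla_\eta H(\cdot))=1$.

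First, I would write
\begin{equation*}
\langle H(x)^{p-1}\nabla_\eta H(x)-H(y)^{p-1}\nabla_\eta H(y),\,x-y\rangle
=A-B-C+D,
\end{equation*}
where $A=\langle H(x)^{p-1}\nabla_\eta H(x),x\rangle$, $B=\langle H(x)^{p-1}\nabla_\eta H(x),y\rangle$, $C=\langle H(y)^{p-1}\nabla_\eta H(y),x\rangle$ and $D=\langle H(y)^{p-1}\nabla_\eta H(y),y\rangle$. By Lemma \ref{Happ}(A) (the Euler-type identity $x\cdot\nabla_\eta H(x)=H(x)$), one gets exactly $A=H(x)^p$ and $D=H(y)^p$. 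Any singular values at $x=0$ or $y=0$ cause no trouble, since in the range $p\ge 2$ the vector field $H(\cdot)^{p-1}\nabla_\eta H(\cdot)$ extends continuously by $0$ at the origin.

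Next, I would estimate the cross terms $B$ and $C$ from above. Applying the Cauchy--Schwarz inequality (D1) with $\xi=\nabla_\eta H(x)$ and using (D2), namely $H_0(\nabla_\eta H(x))=1$ for $x\neq 0$, gives $\langle\nabla_\eta H(x),y\rangle\le H(y)\,H_0(\nabla_\eta H(x))=H(y)$; multiplying by the nonnegative factor $H(x)^{p-1}$ yields $B\le H(x)^{p-1}H(y)$. The symmetric argument gives $C\le H(y)^{p-1}H(x)$. Therefore
\begin{equation*}
A-B-C+D\;\ge\;H(x)^p-H(x)^{p-1}H(y)-H(y)^{p-1}H(x)+H(y)^p.
\end{equation*}

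Finally, I would factor the right-hand side as
\begin{equation*}
H(x)^{p-1}\bigl(H(x)-H(y)\bigr)-H(y)^{p-1}\bigl(H(x)-H(y)\bigr)=\bigl(H(x)^{p-1}-H(y)^{p-1}\bigr)\bigl(H(x)-H(y)\bigr),
\end{equation*}
which is precisely \eqref{dualthmapp3}. There is really no main obstacle here; the only subtle point is recognizing that property (D2) is exactly what turns the Cauchy--Schwarz bound $H(y)H_0(\nabla_\eta H(x))$ into the clean quantity $H(y)$, and making sure the degenerate cases $x=0$ or $y=0$ are handled by the natural continuous extension of $H^{p-1}\nabla_\eta H$.
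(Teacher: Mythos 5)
Your proposal is correct and follows essentially the same route as the paper: expand the inner product, evaluate the diagonal terms via the Euler identity $x\cdot\nabla_\eta H(x)=H(x)$, bound the cross terms with the anisotropic Cauchy--Schwarz inequality (D1) combined with $H_0(\nabla_\eta H(\cdot))=1$ from (D2), and factor. Your explicit handling of the degenerate cases $x=0$ or $y=0$ is a small refinement the paper omits, and you also avoid a typographical slip in the paper's displayed chain where $H(x)^p+H(y)^p$ momentarily appears as $H(x)^{p-1}+H(y)^{p-1}$.
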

\begin{proof}
Using Lemma \ref{dualthm} we observe that
\begin{equation}\label{dualthmapp1}
H(x)^{p-1}\langle\nabla_{\eta}H(x),y\rangle\leq H(x)^{p-1} H_0(\nabla_{\eta}H(x))H(y)=H(x)^{p-1}H(y)\text{ and}
\end{equation}
\begin{equation}\label{dualthmapp2}
H(y)^{p-1}\langle\nabla_{\eta}H(y),x\rangle\leq H(y)^{p-1} H_0(\nabla_{\eta}H(y))H(x)=H(y)^{p-1}H(x),
\end{equation}
holds for every $x,y\in\mathbb{R}^N$. Hence using \eqref{dualthmapp1} and \eqref{dualthmapp2}, for all $x,y\in\mathbb{R}^N$ we obtain
\begin{equation*}
\begin{split}
&\langle H(x)^{p-1}\nabla_{\eta}H(x)-H(y)^{p-1}\nabla_{\eta}H(y),x-y\rangle\\
&=H(x)^p+H(y)^p-H(x)^{p-1}\langle\nabla_{\eta}H(x),y\rangle-H(y)^{p-1}\langle\nabla_{\eta}H(y),x\rangle\\
&\geq H(x)^{p-1}+H(y)^{p-1}-H(x)^{p-1}H(y)-H(y)^{p-1}H(x)\\
&=\big(H(x)^{p-1}-H(y)^{p-1}\big)\big(H(x)-H(y)\big).
\end{split}
\end{equation*}
\end{proof}

The following result follows from Belloni-Ferone-Kawohl \cite[Theorem 3.1]{BFKzamp}.
\begin{Lemma}\label{eigenfn}
There exists a positive eigenfunction $e_1\in X\cap L^{\infty}(\Om)$ such that $\|e_1\|_{L^\infty(\Om)}=1$ corresponding to the first eigenvalue $\lambda_1>0$ satisfying the equation
\begin{equation}\label{eigeneqn}
-\Delta_{H,p}v=\lambda_{1}|v|^{p-2}v\text{ in }\Om,\;\;
v=0\text{ in }\partial\Om.
\end{equation}
\end{Lemma}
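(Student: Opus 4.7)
The plan is to obtain $e_1$ by the direct method applied to the Rayleigh quotient associated with the Finsler $p$-Laplacian, and then upgrade a minimizer to a positive, bounded eigenfunction normalized in $L^\infty$. Define
\[
\lambda_1 := \inf\Bigl\{\|u\|_X^p \,:\, u\in X,\ \int_\Omega |u|^p\,dx = 1\Bigr\},
\]
where $\|u\|_X = \bigl(\int_\Omega H(\nabla u)^p\,dx\bigr)^{1/p}$ as in \eqref{equinorm}. By Remark \ref{hypormk2}, $\|\cdot\|_X$ is equivalent to the standard $W_0^{1,p}$-norm, so the classical Poincaré inequality gives $\lambda_1 > 0$. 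Take a minimizing sequence $\{u_k\} \subset X$ with $\int_\Omega |u_k|^p = 1$ and $\|u_k\|_X^p \to \lambda_1$. Since $\{u_k\}$ is bounded in the reflexive space $X$, up to a subsequence $u_k \rightharpoonup e_1$ in $X$ and $u_k \to e_1$ strongly in $L^p(\Omega)$ by the Rellich--Kondrachov compact embedding. The constraint passes to the limit, giving $\int_\Omega |e_1|^p\,dx = 1$, and convexity of $t\mapsto t^p$ combined with convexity of $H$ (Lemma~\ref{Happ}(D)) makes $u\mapsto \|u\|_X^p$ weakly lower semicontinuous, hence $\|e_1\|_X^p\le \lambda_1$, forcing equality.

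Next I would derive the Euler--Lagrange equation. For any $\varphi \in C_c^1(\Omega)$, the standard perturbation $u = e_1 + t\varphi$ shows that $e_1$ is a critical point of $\|u\|_X^p - \lambda_1 \int_\Omega |u|^p\,dx$, and differentiating $H(\nabla u)^p$ and using Corollary~\ref{regrmk} yields
\[
\int_\Omega H(\nabla e_1)^{p-1}\nabla_\eta H(\nabla e_1)\cdot \nabla \varphi\,dx = \lambda_1 \int_\Omega |e_1|^{p-2}e_1\,\varphi\,dx,
\]
which is \eqref{eigeneqn} in the weak sense. To produce a nonnegative minimizer, observe that $H(\nabla|u|) = H(\nabla u)$ a.e. since $H$ satisfies $H(-x)=H(x)$ by (H2), hence $\||u|\|_X = \|u\|_X$; replacing $e_1$ by $|e_1|$ preserves the minimum, so we may assume $e_1 \ge 0$.

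The next step is to produce the $L^\infty$ bound. Since $e_1 \in X$ is a weak subsolution of $-\Delta_{H,p} v = \lambda_1 v^{p-1}$ with a right-hand side of the same polynomial order, Moser iteration on the $C_1|x|^p \le H(x)^p \le C_2|x|^p$ framework (Remark~\ref{hypormk2}) delivers $e_1 \in L^\infty(\Omega)$; this is the step I expect to be the most delicate, since one must track the anisotropic constants carefully through the iteration, but structurally the proof is identical to the $p$-Laplacian case because of the two-sided norm equivalence. Once $e_1 \in L^\infty$, strict positivity $e_1 > 0$ in $\Omega$ follows from a Harnack/strong maximum principle for $\Delta_{H,p}$ (available in Belloni--Ferone--Kawohl and Xia via the strict monotonicity in Corollary~\ref{regapp}, applied to the nonnegative supersolution $e_1$). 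Finally, dividing by $\|e_1\|_{L^\infty(\Omega)}$ and exploiting the homogeneity relation \eqref{homo} gives a new eigenfunction, still corresponding to $\lambda_1$, whose $L^\infty$-norm equals $1$; this is the desired $e_1$.
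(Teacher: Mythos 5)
Your proposal is correct, but it takes a genuinely different route from the paper: the paper does not prove this lemma at all, it simply imports it from Belloni--Ferone--Kawohl \cite[Theorem 3.1]{BFKzamp}, whereas you reconstruct the underlying variational argument from scratch. Your reconstruction is sound: the Rayleigh-quotient minimization with $\lambda_1>0$ via the norm equivalence of Remark \ref{hypormk2} and Poincar\'e, weak lower semicontinuity from convexity of $H^p$ (Lemma \ref{Happ}(D)), the replacement of $e_1$ by $|e_1|$ using the evenness of $H$ from (H2), the Euler--Lagrange equation via the Lagrange multiplier identity (which equals $\lambda_1$ by $p$-homogeneity), boundedness by Moser iteration or Stampacchia truncation under the two-sided bounds $C_1|x|\le H(x)\le C_2|x|$, strict positivity from the Harnack inequality for quasilinear operators satisfying \eqref{lbd}--\eqref{ubd} and Corollary \ref{regapp} (exactly the Heinonen--Kilpel\"ainen--Martio machinery the paper invokes in Lemma \ref{auxresult}), and finally the renormalization to $\|e_1\|_{L^\infty(\Om)}=1$ via the homogeneity relation \eqref{homo}. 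What the citation buys the authors is brevity and the assurance that the anisotropic constants in the iteration and Harnack steps have been checked once and for all; what your self-contained argument buys is transparency about exactly which structural hypotheses ((H0)--(H4), the norm equivalence, and strict monotonicity) are actually used, at the cost of still having to lean on the general quasilinear regularity theory for the $L^\infty$ and positivity steps. Neither route has a gap.
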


For the following Sobolev embedding, see Evans \cite{Evans}.
\begin{Lemma}\label{embedding}
The inclusion map
\[
W_0^{1,p}(\Omega)\hookrightarrow
\begin{cases}
L^t(\Om),\text{ for }t\in[1,p^{*}],\text{ if }1<p<N,\\
L^t(\Om),\text{ for }t\in[1,\infty),\text{ if }p=N,\\
C(\overline{\Om}),\text{ if }p>N,
\end{cases}
\]
is continuous. Moreover, the above mapping is compact except for $r=p^*$, when $1<p<N$.
\end{Lemma}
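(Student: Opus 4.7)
The statement is the classical Sobolev embedding/Rellich--Kondrachov theorem for $W_0^{1,p}(\Omega)$ on a bounded smooth domain, which is cited from Evans' textbook. The plan is therefore not to produce anything novel but to indicate the three well-trodden routes, one per regime of $p$, plus the compactness argument. Since $\Omega$ is bounded and smooth, I would first reduce everything to the case of $C_c^\infty(\Omega)$ by density (extending by zero gives $C_c^\infty(\Omega) \subset C_c^\infty(\mathbb{R}^N)$), prove the inequality there, and then pass to the closure to obtain the embedding on $W_0^{1,p}(\Omega)$.

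For $1<p<N$, I would first establish the Gagliardo--Nirenberg--Sobolev inequality $\|u\|_{L^{p^*}(\mathbb{R}^N)} \le C(N,p)\|\nabla u\|_{L^p(\mathbb{R}^N)}$ for $u \in C_c^\infty(\mathbb{R}^N)$. The standard route starts with $p=1$, proving $\|u\|_{L^{N/(N-1)}} \le C\|\nabla u\|_{L^1}$ via the fundamental theorem of calculus applied coordinate-wise and iterated use of the generalized Hölder inequality; then one applies this to $|u|^\gamma$ with $\gamma = \tfrac{p(N-1)}{N-p}$ and rearranges to obtain the $p^*$ exponent. Interpolation (or H\"older, using boundedness of $\Omega$) then yields continuous embedding into $L^t(\Omega)$ for every $t \in [1,p^*]$.

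For $p = N$ the target $p^* = \infty$ is not attained, but Morrey-type reasoning on mean oscillation (or a direct scaling argument) gives $L^t$ for all finite $t$; the cleanest route is Trudinger's exponential estimate applied and then dropped down to any $L^t$, $t<\infty$. For $p > N$, I would invoke Morrey's inequality $[u]_{C^{0,1-N/p}} \le C\|\nabla u\|_{L^p}$ obtained from the Poincar\'e-type bound on averages over balls; combined with an $L^\infty$ bound (using $u \in W_0^{1,p}$ and the mean-value representation) this yields the continuous embedding into $C(\overline\Omega)$.

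Compactness in each sub-critical case follows from the Rellich--Kondrachov theorem: continuity into $L^{p^*}$ (or $C^{0,\alpha}$) combined with Fr\'echet--Kolmogorov's criterion on translates yields precompactness in $L^t$ for $t$ strictly below the critical exponent. The main obstacle---and precisely the case the statement excludes---is the absence of compactness at $t = p^*$, witnessed by the standard concentration family $u_\varepsilon(x) = \varepsilon^{(p-N)/p}\,u(x/\varepsilon)$ which is bounded in $W_0^{1,p}$ yet has no $L^{p^*}$-convergent subsequence; this is what forces the ``except $t = p^*$'' clause and is well-known to be sharp.
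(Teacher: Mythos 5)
Your outline is correct and matches the paper's treatment: the paper gives no proof of this lemma, citing Evans, and your sketch is precisely the standard Evans argument (Gagliardo--Nirenberg--Sobolev for $1<p<N$, the borderline/Trudinger case for $p=N$, Morrey for $p>N$, and Rellich--Kondrachov with the concentration family showing sharpness at $t=p^*$). Nothing further is needed.
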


Next, we state the algebraic inequality from Peral \cite[Lemma A.0.5]{Pe}.

\begin{Lemma}\label{AI}
For any $a,b\in\mathbb{R}^N$, there exists a constant $C=C(p)>0$, such that
\begin{equation}\label{ALGin}
\langle |a|^{p-2}a-|b|^{p-2}b, a-b \rangle\geq
\begin{cases}
C|a-b|^p,\text{ if }2\leq p<\infty,\\
C\frac{|a-b|^2}{(|a|+|b|)^{2-p}},\text{ if }1<p<2.
\end{cases}
\end{equation}
\end{Lemma}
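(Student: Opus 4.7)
The plan is to prove both cases of \eqref{ALGin} through a single integral representation along the segment joining $b$ and $a$, and then separate the analysis based on the regime of $p$. I would begin by writing
\begin{equation*}
\langle |a|^{p-2}a - |b|^{p-2}b, a - b\rangle = \int_0^1 \frac{d}{dt}\langle |\xi(t)|^{p-2}\xi(t), a-b\rangle\,dt,
\end{equation*}
where $\xi(t) := b + t(a-b)$. A direct differentiation (justified away from $\xi(t)=0$, with the degenerate case treated by continuity/density) yields the pointwise identity
\begin{equation*}
\frac{d}{dt}\langle |\xi(t)|^{p-2}\xi(t), a-b\rangle = |\xi(t)|^{p-2}|a-b|^2 + (p-2)|\xi(t)|^{p-4}\langle \xi(t), a-b\rangle^2.
\end{equation*}

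For $p\geq 2$ both summands on the right are non-negative, so one may simply discard the second and reduce the problem to the scalar estimate
\begin{equation*}
\int_0^1 |\xi(t)|^{p-2}\,dt \geq c_p\,|a-b|^{p-2}.
\end{equation*}
By homogeneity one may normalize $|a-b|=1$ and split cases according to whether $\min(|a|,|b|)$ is small or large relative to $|a-b|$. In either regime the triangle (respectively reverse triangle) inequality forces $|\xi(t)| \geq c\,|a-b|$ on a subinterval of $[0,1]$ of definite positive length, which closes the estimate and delivers the first branch of \eqref{ALGin}.

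For $1<p<2$ the second summand above is non-positive, but the Cauchy--Schwarz bound $\langle \xi(t),a-b\rangle^2 \leq |\xi(t)|^2\,|a-b|^2$ combined with $p-2<0$ gives
\begin{equation*}
|\xi(t)|^{p-2}|a-b|^2 + (p-2)|\xi(t)|^{p-4}\langle \xi(t),a-b\rangle^2 \geq (p-1)\,|\xi(t)|^{p-2}\,|a-b|^2,
\end{equation*}
which is non-negative since $p-1>0$. Since $|\xi(t)|=|(1-t)b+ta|\leq |a|+|b|$ on $[0,1]$ and $s\mapsto s^{p-2}$ is decreasing here, one has $|\xi(t)|^{p-2}\geq (|a|+|b|)^{p-2}$ pointwise, and integrating over $[0,1]$ yields the second branch of \eqref{ALGin} with the explicit constant $C=p-1$.

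The main obstacle is the lower bound $\int_0^1 |\xi(t)|^{p-2}\,dt \geq c_p\,|a-b|^{p-2}$ in the case $p\geq 2$: Jensen's inequality applied to the convex function $s\mapsto s^{p-2}$ runs in the wrong direction, so an averaging argument cannot work directly, and the case-split above (which is the standard device underlying Peral's Lemma A.0.5) is unavoidable. Once this geometric estimate is in place, both branches of \eqref{ALGin} follow by straightforward bookkeeping from the integral representation.
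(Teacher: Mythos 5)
Your proposal is correct, but note that the paper does not actually prove Lemma \ref{AI}: it is quoted verbatim from Peral's lecture notes (Lemma A.0.5 there), so there is no in-paper argument to compare against. What you give is the classical self-contained proof via the integral representation along the segment $\xi(t)=b+t(a-b)$, and all the steps check out: the differentiation identity is right, the $1<p<2$ branch closes cleanly with the Cauchy--Schwarz reduction to $(p-1)|\xi(t)|^{p-2}|a-b|^2$ and the monotonicity of $s\mapsto s^{p-2}$, yielding the explicit constant $C=p-1$. For the $p\geq 2$ branch, the geometric estimate you flag as the main obstacle can be made fully quantitative without any case-split: writing $t_0$ for the minimizer of $t\mapsto|\xi(t)|^2$ over $\mathbb{R}$, one has $|\xi(t)|^2=|\xi(t_0)|^2+(t-t_0)^2|a-b|^2\geq (t-t_0)^2|a-b|^2$, hence
\begin{equation*}
\int_0^1|\xi(t)|^{p-2}\,dt\;\geq\;|a-b|^{p-2}\int_0^1|t-t_0|^{p-2}\,dt\;\geq\;\frac{2^{2-p}}{p-1}\,|a-b|^{p-2},
\end{equation*}
since the last integral is minimized at $t_0=\tfrac12$. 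This same bound also disposes of the degenerate case where $\xi(t)$ vanishes at one point: for $1<p<2$ the integrand $|\xi(t)|^{p-2}$ is still integrable there, so no separate density argument is really needed. Your parenthetical about Jensen is only an aside (and in fact Jensen does run the right way for $p\geq 3$), but it does not affect the argument. In short: a correct and essentially standard proof of a statement the paper leaves to a citation.
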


Our next result ensures that test functions in \eqref{psingtest} and \eqref{pursingtest} can be chosen from the space $X$ itself.
\begin{Lemma}\label{testfn}
Let $u\in X$ be a weak solution of the problem \eqref{maineqn} or \eqref{maineqn1}, then  \eqref{psingtest} or \eqref{pursingtest} holds, for every $\phi\in X$ respectively.
\end{Lemma}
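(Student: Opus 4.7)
My plan is a density argument to upgrade the admissible test functions in \eqref{psingtest} and \eqref{pursingtest} from $C_c^{1}(\Om)$ to $X$, exploiting the local lower bound $u \geq c_{\omega} > 0$ for $\omega \Subset \Om$ built into the definition of weak solution. First, by the linearity of both sides and the decomposition $\phi = \phi^{+} - \phi^{-}$ (with $\phi^{\pm} \in X$), I reduce to $\phi \in X$ with $\phi \geq 0$. I then fix an exhaustion $\omega_n \Subset \omega_{n+1} \Subset \Om$ and cutoffs $\eta_n \in C_c^{\infty}(\Om)$ with $0 \leq \eta_n \leq 1$, $\eta_n \equiv 1$ on $\omega_n$, $\mathrm{supp}(\eta_n) \subset \omega_{n+1}$, and $\eta_n \nearrow 1$ pointwise, and set $\phi_n := \eta_n \phi$. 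Then $\phi_n \in X$ has compact support in $\Om$, $0 \leq \phi_n \nearrow \phi$ a.e., and $\phi_n \to \phi$ in $X$; the last convergence uses Hardy's inequality on the bounded smooth domain $\Om$ (applied to $\phi \in W_0^{1,p}(\Om)$) to control $\phi \nabla \eta_n \to 0$ in $L^p(\Om;\mathbb{R}^N)$.

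Next, for each fixed $n$ I would check the identity at $\phi_n$. Mollification yields $\psi_{n,k} \in C_c^{1}(\Om)$ with $\mathrm{supp}(\psi_{n,k}) \subset \omega_{n+2}$ and $\psi_{n,k} \to \phi_n$ in $X$ as $k \to \infty$. The left-hand side is continuous on $X$ because by \eqref{ubd} the vector field $H(\nabla u)^{p-1} \nabla_{\eta} H(\nabla u)$ lies in $L^{p'}(\Om;\mathbb{R}^{N})$. The singular term is continuous on the subspace of $X$-functions supported in $\overline{\omega_{n+2}}$, since $u \geq c > 0$ there gives $f/u^{q(x)} \in L^{m}(\omega_{n+2})$, and H\"older together with Lemma~\ref{embedding} supplies the continuity in each of the three sub-cases of $p$ in Theorem~\ref{thm1}. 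In \eqref{pursingtest}, the map $\psi \mapsto \int_{\Om} u^{r}\psi\,dx$ is continuous on $X$ because $u \in L^{p^{*}}(\Om)$ and $r < p^{*}-1$ imply $u^{r} \in L^{(p^{*})'}(\Om)$. Passing $k \to \infty$ delivers the identity with test function $\phi_n$.

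Finally I let $n \to \infty$. The left-hand side converges to its value at $\phi$ thanks to the $L^{p'}$ bound above. The main obstacle is the singular integral on the right, where a global continuity estimate is unavailable since $f/u^{q(x)}$ is a priori only in $L^{1}_{\loc}(\Om)$; here the sign reduction $\phi \geq 0$ saves the argument, rendering both right-hand integrands non-negative and monotone, $0 \leq \tfrac{f \phi_n}{u^{q(x)}} \nearrow \tfrac{f \phi}{u^{q(x)}}$ and $0 \leq u^{r}\phi_n \nearrow u^{r}\phi$ pointwise a.e. Monotone convergence then gives
\[
\int_{\Om} \frac{f \phi_n}{u^{q(x)}}\, dx \longrightarrow \int_{\Om} \frac{f \phi}{u^{q(x)}}\, dx \in [0,\infty] \quad \text{and} \quad \int_{\Om} u^{r} \phi_n \, dx \longrightarrow \int_{\Om} u^{r} \phi \, dx \in [0,\infty],
\]
and the finite limit on the left forces the right-hand limits to be finite. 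This establishes the identity for $\phi \geq 0$, and the general case follows by splitting $\phi = \phi^{+} - \phi^{-}$.
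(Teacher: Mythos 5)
Your overall strategy --- reduce to $\phi\geq 0$, localize with cutoffs $\eta_n$, use the local bound $u\geq c_{\omega}>0$ to make sense of the singular term on compact subsets, and finish with monotone convergence --- is coherent, genuinely different from the paper's argument, and the final monotone-convergence step is handled correctly. The gap is in the middle step, where you pass from $C_c^1$ test functions to the compactly supported $X$-function $\phi_n$. You justify the convergence of the singular integral along the mollifications $\psi_{n,k}\to\phi_n$ by asserting $f/u^{q(x)}\in L^{m}(\omega_{n+2})$ and invoking H\"older with the embeddings corresponding to the three sub-cases of Theorem \ref{thm1}(b). But the lemma is stated for an arbitrary weak solution of \eqref{maineqn} and is invoked in the proof of the uniqueness statement, Theorem \ref{thm1}(a), where $f$ is only assumed to lie in $L^1(\Om)$. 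In that case $f/u^{q(x)}$ is merely in $L^1(\omega_{n+2})$, and for $2\leq p\leq N$ the space $X$ does not embed into $L^\infty$, so $\psi_{n,k}\to\phi_n$ in $X$ does not imply $\int f u^{-q(x)}\psi_{n,k}\,dx\to\int f u^{-q(x)}\phi_n\,dx$. The step can be repaired --- first truncate $\phi$ at height $M$ so that the mollified approximants are uniformly bounded and dominated convergence applies against the local majorant $c_{\omega}^{-\|q\|_{L^\infty(\Om)}}f\in L^1(\omega_{n+2})$, then remove the truncation by monotone convergence --- but as written your argument only covers the integrability hypotheses of part (b) (and of problem \eqref{maineqn1}, where $f$ is constant).

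For comparison, the paper sidesteps this entirely with a Fatou argument: writing $g(x,u)$ for the right-hand side and approximating $|\psi|$ by nonnegative $\psi_n\in C_c^1(\Om)$ converging in $X$ and pointwise a.e., Fatou's lemma and the equation itself yield
\[
\Big|\int_{\Om}g(x,u)\psi\,dx\Big|\leq\liminf_{n\to\infty}\int_{\Om}g(x,u)\psi_n\,dx=\liminf_{n\to\infty}\langle-\Delta_{H,p}u,\psi_n\rangle\leq C\|u\|^{p-1}\|\psi\|,
\]
so the right-hand side defines a bounded linear functional on all of $X$ with no integrability of $f$ beyond $L^1$; applying this bound to $\psi=\phi_n-\phi$ for any $C_c^1$ approximating sequence concludes. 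That route also spares the Hardy-inequality machinery you need to get $\eta_n\phi\to\phi$ in $X$, and it avoids the sign decomposition.
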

\begin{proof}
Let $u\in X$ solves the problem $\eqref{maineqn}$ or \eqref{maineqn1}. Suppose 
$$
g(x,u)=f(x)u^{-q(x)}\text{ or }\lambda u^{-q(x)}+u^r.
$$
Therefore, for every $\phi\in C_c^1(\Omega)$, from \eqref{psingtest} and \eqref{pursingtest} we have
\begin{equation}\label{smthtest}
\int_{\Om}H(\nabla u)^{p-1}\nabla_{\eta}H(\nabla u)\nabla\phi\,dx=\int_{\Omega}g(x,u)\phi\,dx.
\end{equation}
Now for every $\psi\in X$, there exists a sequence of function $0\leq \psi_n\in C_c^{1}(\Omega)\to |\psi|$ strongly in $X$ as $n\to\infty$ and pointwise almost everywhere in $\Omega$. We observe that
\begin{equation}\label{unique}
\begin{split}
\Big|\int_{\Omega}g(x,u)\psi\,dx\Big|&\leq \int_{\Omega}g(x,u)|\psi|\,dx\leq\liminf_{n\to\infty}\int_{\Omega}g(x,u)\psi_n\,dx\\
&=\liminf_{n\to\infty}<-\Delta_{H,p} u,\psi_n>\leq C\|u\|^{p-1}\lim_{n\to\infty}\|\psi_n\|\\
&\leq C\|u\|^{p-1}\||\psi|\|\leq C\|u\|^{p-1}\|\psi\|,
\end{split}
\end{equation}
for some positive constant $C$. Let $\phi\in X$, then there exists a sequence $\{\phi_n\}\subset C_c^{1}(\Omega)$ which converges to $\phi$ strongly in $X$. We claim that
$$
\lim_{n\to\infty}\int_{\Omega}g(x,u)\phi_n\,dx=\int_{\Omega}g(x,u)\phi\,dx.
$$
Indeed, using $\psi=\phi_n-\phi$ in \eqref{unique}, we obtain
\begin{equation}\label{lhslim}
\lim_{n\to\infty}\Big|\int_{\Omega}g(x,u)(\phi_n-\phi)\,dx\Big|\leq C||u||^{p-1}\lim_{n\to\infty}||\phi_n-\phi||=0.
\end{equation}
Again, since $\phi_n\to\phi$ strongly in $X$ as $n\to\infty$, we have
\begin{equation}\label{rhslim}
\lim_{n\to\infty}H(\nabla u)^{p-1}\nabla_{\eta}H(\nabla u)\nabla(\phi_n-\phi)\,dx=0.
\end{equation}
Hence, using \eqref{lhslim} and \eqref{rhslim} in \eqref{smthtest} the result follows.
\end{proof}

\section{Preliminaries for the proof of Theorem \ref{thm1} and Theorem \ref{thm1new}}
We present proof of our first main result in this section.
Here again, we assume $1<p<\infty$,
unless otherwise mentioned. For $n\in\mathbb{N}$ and nonnegative $f\in L^1(\Om)\setminus\{0\}$, let us define $f_n(x):=\min\{f(x),n\}$ and consider the following approximated problem
\begin{equation}\label{mainapprox}
\left.
\begin{aligned}
-\Delta_{H,p}u&=\frac{f_n(x)}{(u^{+}+\frac{1}{n})^{q(x)}}\text{ in }\Omega,\\
u&=0\text{ on }\partial\Omega.
\end{aligned}\right\}
\end{equation}
First, we prove the following useful result.
\begin{Lemma}\label{auxresult}
Let $g\in L^{\infty}(\Om)\setminus\{0\}$ be a nonnegative function in $\Om$. Then there exists a unique solution $u\in X\cap L^{\infty}(\Om)$ of the problem
\begin{equation}\label{auxresulteqn}
\begin{split}
-\Delta_{H,p}u=g\text{ in }\Om,\,u>0\text{ in }\Om,\,u=0\text{ on }\partial\Om,
\end{split}
\end{equation}
such that for every $\omega\Subset\Om$, there exists a constant $c_{\omega}$ satisfying $u\geq c_{\omega}>0$ in $\omega$.
\end{Lemma}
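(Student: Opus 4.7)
The plan is to obtain existence by the direct method of the calculus of variations, pointwise positivity together with the required interior lower bound from the strong maximum principle and Harnack inequality for the Finsler $p$-Laplacian, $L^\infty$ regularity via a Stampacchia-type iteration, and uniqueness from the strict monotonicity of Corollary \ref{regapp}. Concretely, I will introduce the energy functional
\[
J(u) = \frac{1}{p}\int_{\Om} H(\nabla u)^{p}\,dx - \int_{\Om} g\,u\,dx, \qquad u\in X.
\]
By Remark \ref{hypormk2} and Poincar\'e's inequality, $J$ is coercive on $X$; the convexity of $H^{p}$ (already exploited in the proof of Lemma \ref{algpre}) together with the weak continuity of $u\mapsto\int_{\Om}g\,u\,dx$ (since $g\in L^{\infty}(\Om)$) gives weak lower semicontinuity. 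A minimizer $u\in X$ therefore exists, and computing the Euler--Lagrange equation with the help of Lemma \ref{Happ} shows that $u$ is a weak solution of \eqref{auxresulteqn}.

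Since $H$ is even by $(H2)$ and $\nabla |u|=\operatorname{sgn}(u)\,\nabla u$ almost everywhere, $J(|u|)\le J(u)$, so the minimizer can be taken nonnegative; because $g\not\equiv 0$, it is not identically zero. The strong maximum principle for the Finsler $p$-Laplacian (through the Harnack machinery of \cite{Xiathesis}) then forces $u>0$ in $\Om$ and yields, for every $\omega\Subset\Om$, a constant $c_{\omega}>0$ such that $u\ge c_{\omega}$ on $\omega$. For the $L^{\infty}$ bound I will set $A_{k}=\{u>k\}$ and test with $(u-k)^{+}\in X$; the coercivity estimate \eqref{lbd}, H\"older's inequality and the Sobolev embedding of Lemma \ref{embedding} together yield a Stampacchia-type decay of $|A_{k}|$ in $k$, giving $u\in L^{\infty}(\Om)$ with a bound depending only on $\|g\|_{L^{\infty}(\Om)}$, $p$, $N$ and $|\Om|$.

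For uniqueness, suppose $u_{1},u_{2}\in X$ both solve \eqref{auxresulteqn}. By Lemma \ref{testfn} every element of $X$ is an admissible test function, so testing the two equations with $\phi=u_{1}-u_{2}$ and subtracting gives
\[
\int_{\Om}\bigl\langle H(\nabla u_{1})^{p-1}\nabla_{\eta}H(\nabla u_{1}) - H(\nabla u_{2})^{p-1}\nabla_{\eta}H(\nabla u_{2}),\,\nabla(u_{1}-u_{2})\bigr\rangle\,dx = 0,
\]
and the strict monotonicity of Corollary \ref{regapp} forces $\nabla u_{1}=\nabla u_{2}$ almost everywhere; together with the shared zero trace, this yields $u_{1}=u_{2}$. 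The main obstacle I anticipate is the positivity step: unlike for the Euclidean $p$-Laplacian, the strong maximum principle and Harnack inequality for $\Delta_{H,p}$ are not immediate in this paper's toolbox, and extracting the interior lower bound $c_{\omega}$ will require appealing to the anisotropic regularity theory of \cite{Xiathesis} (or analogous sources). The remaining steps proceed along classical lines once the structural inequalities \eqref{lbd}, \eqref{ubd} and the monotonicity from Corollary \ref{regapp} are in hand.
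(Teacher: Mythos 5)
Your proposal follows essentially the same route as the paper's proof: existence of a minimizer of the same energy functional $J$, a Stampacchia truncation argument for the $L^\infty$ bound, quasilinear potential theory for the interior positivity (the paper invokes Heinonen--Kilpel\"ainen--Martio, Theorem 3.59, with the structure conditions supplied by Corollary \ref{regrmk} and Corollary \ref{regapp}, which is exactly the machinery you anticipate needing), and uniqueness from strict monotonicity. The only cosmetic differences are that the paper obtains nonnegativity by testing with $u_-=\min\{u,0\}$ rather than via $J(|u|)\le J(u)$, and deduces uniqueness from the strict convexity of $J$ rather than by subtracting the two equations and testing with $u_1-u_2$; both variants are correct and equivalent in substance.
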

\begin{proof}
\textbf{Existence:} We define the energy functional $J: X\to\mathbb{R}$ by
$$
J(u):=\frac{1}{p}\int_{\Om}H(\nabla u)^p\,dx-\int_{\Om}gu\,dx.
$$
Noting Lemma \ref{Happ} and Lemma \ref{embedding}, it can be easily seen that $J$ is coercive, weakly lower semicontinuous and a strictly convex functional. Therefore, $J$ has a unique minimizer $u$ in $X$. Since $J\in C^1(X)$, so $u\in X$ is the unique solution of the equation
\begin{equation}\label{auxeqn}
-\Delta_{H,p}u=g\text{ in }\Om,\,u=0\text{ on }\partial\Om.
\end{equation}
Noting $g\geq 0$ and choosing $u_-:=\min\{u,0\}$ as a test function in \eqref{auxeqn}, by {\eqref{lbd} we obtain
\begin{equation*}
\begin{split}
C_1\int_{\Omega}|\nabla u_-|^p\,dx&\leq\int_{\Omega}H\big(\nabla u_-\big)^p\,dx=\int_{\Omega}H(\nabla u)^{p-1}\nabla_{\eta}H(\nabla u)\nabla u_-\,dx=\int_{\Omega}gu_-\,dx\leq 0,
\end{split}
\end{equation*}
which gives, $u\geq 0$ in $\Omega$.\\
\textbf{Boundedness:} For any $k\geq 1$, we define a subset of $\Omega$ given by
$$
A(k):=\{x\in\Omega:u(x)\geq k\text{ in }\Omega\}.
$$
Choosing $\phi_k:=(u-k)^+=\max\{u-k,0\}$ as a test function in \eqref{auxeqn} and using the continuity of the mapping $X\hookrightarrow L^l(\Omega)$ for some $l>p$ from Lemma \ref{embedding} along with \eqref{lbd} we obtain
\begin{equation*}
\begin{split}
\int_{\Omega}H(\nabla\phi_k)^p\,dx=\int_{\Om}g\phi_k~dx\leq \|g\|_{L^{\infty}(\Om)}\int_{A(k)}(u-k)\,dx\leq C_0\|g\|_{L^{\infty}(\Om)}|A(k)|^\frac{l-1}{l}\|\phi_k\|,
\end{split}
\end{equation*}
where $C_0$ is the Sobolev constant. Hence, we have
\begin{equation}\label{new}
    \|\phi_k\|^{p-1} \leq C|A(k)|^\frac{l-1}{l},
\end{equation}
for some positive constant $C$ depending on $\|g\|_{L^{\infty}(\Om)}$. We choose $h$ such that $1<k<h$ then using \eqref{new}, $(u(x)-k)^p \geq (h-k)^p$ on $A(h)$ and $A(h)\subset A(k)$ we get
\begin{align*}
(h-k)^p|A(h)|^\frac{p}{l}&\leq\left(\int_{A(h)}(u(x)-k)^l\,dx\right)^\frac{p}{l}\leq\left(\int_{A(k)}(u(x)-k)^l~dx\right)^\frac{p}{l}\\
&\leq\|\phi_k\|^p\leq C|A(k)|^\frac{p(l-1)}{l(p-1)}.
\end{align*}
Hence, we obtain
$$
|A(h)|\leq\frac{C}{(h-k)^l}|A(k)|^\frac{l-1}{p-1}.
$$
Therefore, observing that $\frac{l-1}{p-1}>1$, using Kinderlehrer-Stampacchia \cite[Lemma B.1]{Stam} we have
$$
\|u\|_{L^\infty(\Omega)}\leq C,
$$
for some positive constant $C$ depending on $\|g\|_{L^{\infty}(\Om)}$.}\\
\textbf{Positivity:} Moreover since $g\neq 0$, we have $u\neq 0$ in $\Omega$. Noting Corollary \ref{regrmk} and Corollary \ref{regapp}, we can apply Heinonen-Kilpel\"{a}ine-Martio \cite[Theorem 3.59]{Juh} so that for every $\omega\Subset\Omega$, there exists a constant $c_{\omega}>0$ such that $u\geq c_{\omega}>0$ in $\Omega$. Thus $u>0$ in $\Omega$.
\end{proof}

Then, we have the following result concerning the problem \eqref{mainapprox}.
\begin{Lemma}\label{approxexi}
Let $2\leq p<\infty$. Then for every $n\in\mathbb{N}$, there exists a unique positive solution $u_n\in W_{0}^{1,p}(\Omega)\cap L^{\infty}(\Omega)$ to the problem \eqref{mainapprox} such that $u_{n+1}\geq u_n$ in $\Om$, for every $n\in\mathbb{N}$. Moreover for every $\omega\Subset\Om$, there exists a constant $c_{\omega}>0$ (independent of $n$) such that $u_n \geq c_{\omega}>0$ in $\omega$.
\end{Lemma}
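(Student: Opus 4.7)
The plan is to obtain existence by applying Schauder's fixed point theorem to an auxiliary map built from Lemma~\ref{auxresult}, and then to deduce uniqueness, the monotonicity $u_{n+1} \geq u_n$, and the uniform lower bound from the strict monotonicity of $-\Delta_{H,p}$ expressed in Lemma~\ref{alg}.

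Fix $n \in \mathbb{N}$ and define the operator $T \colon L^p(\Omega) \to L^p(\Omega)$ by $T(v) := u$, where $u \in X \cap L^\infty(\Omega)$ is the unique solution, given by Lemma~\ref{auxresult}, of the linearized problem
$$-\Delta_{H,p} u = g_v(x) := \frac{f_n(x)}{\bigl(v^+ + \tfrac{1}{n}\bigr)^{q(x)}} \text{ in } \Omega, \qquad u = 0 \text{ on } \partial\Omega.$$
Setting $q_M := \|q\|_{L^\infty(\Omega)}$, one has $(v^+ + \tfrac{1}{n})^{q(x)} \geq n^{-q_M}$ pointwise, and hence $\|g_v\|_{L^\infty(\Omega)} \leq n^{1+q_M}$, uniformly in $v$; also $g_v \not\equiv 0$ since $f_n \not\equiv 0$. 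Lemma~\ref{auxresult} then yields a constant $C(n)$, independent of $v$, such that $\|T(v)\|_{L^\infty(\Omega)} + \|T(v)\|_{X} \leq C(n)$. Compactness of $T$ follows from the compact embedding $X \hookrightarrow L^p(\Omega)$ of Lemma~\ref{embedding}. For continuity, if $v_k \to v$ in $L^p(\Omega)$, dominated convergence together with the uniform $L^\infty$ bound gives $g_{v_k} \to g_v$ in every $L^s(\Omega)$, $s < \infty$; testing the difference of equations for $T(v_k)$ and $T(v)$ with $T(v_k) - T(v)$ and invoking Lemma~\ref{alg} produces $T(v_k) \to T(v)$ in $X$, hence in $L^p(\Omega)$. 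Schauder's theorem now furnishes a fixed point $u_n \in X \cap L^\infty(\Omega)$, which is the desired weak solution of \eqref{mainapprox}; positivity and an $n$-dependent lower bound on compact subsets come from Lemma~\ref{auxresult}.

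For uniqueness at level $n$, suppose $u, \tilde u$ are two solutions of \eqref{mainapprox}. Since $(u - \tilde u)^+ \in X$ is admissible by Lemma~\ref{testfn}, subtracting the equations and testing with $(u - \tilde u)^+$ yields
$$\int_{\Omega} \bigl\langle H(\nabla u)^{p-1}\nabla_{\eta}H(\nabla u) - H(\nabla \tilde u)^{p-1}\nabla_{\eta}H(\nabla \tilde u), \nabla(u - \tilde u)^+\bigr\rangle\,dx = \int_{\{u > \tilde u\}} f_n \left[ \frac{1}{(u^+ + \tfrac{1}{n})^{q(x)}} - \frac{1}{(\tilde u^+ + \tfrac{1}{n})^{q(x)}}\right] (u - \tilde u)\,dx.$$
The right-hand side is non-positive because $s \mapsto (s^+ + \tfrac{1}{n})^{-q(x)}$ is non-increasing, while Lemma~\ref{alg} makes the left-hand side bound a nonnegative multiple of $\int_{\{u > \tilde u\}} H(\nabla(u - \tilde u))^p\,dx$. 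Hence $(u-\tilde u)^+ \equiv 0$ and, by symmetry, $u = \tilde u$. The monotonicity $u_{n+1} \geq u_n$ is obtained by an identical comparison: testing the difference of the equations for $u_n$ and $u_{n+1}$ with $(u_n - u_{n+1})^+$, one uses that on $\{u_n > u_{n+1}\}$ both inequalities $u_n^+ + \tfrac{1}{n} \geq u_{n+1}^+ + \tfrac{1}{n+1}$ and $f_n \leq f_{n+1}$ hold, again making the right-hand side non-positive. The uniform lower bound is then immediate: $u_n \geq u_1 \geq c_\omega > 0$ on each $\omega \Subset \Omega$ with $c_\omega$ coming from Lemma~\ref{auxresult} applied to $u_1$, which does not depend on $n$.

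The main obstacle I anticipate is the verification of continuity of $T$, since one must transfer $L^p$ convergence of $v_k$ through the nonlinear operator $-\Delta_{H,p}$ to recover strong convergence of $T(v_k)$ in $X$; this requires the monotonicity inequality of Lemma~\ref{alg} in a quantitative form. A secondary concern is the admissibility of the cut-off test functions $(u - \tilde u)^+$ and $(u_n - u_{n+1})^+$ in the weak formulation \eqref{psingtest}, which is precisely the purpose of Lemma~\ref{testfn} and needs to be invoked carefully given the pointwise singularity of the nonlinearity away from $\{u > 0\}$ --- something that is not an issue here because the approximate right-hand sides are bounded in $L^\infty$.
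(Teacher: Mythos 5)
Your proposal is correct and follows essentially the same route as the paper: a Schauder fixed-point argument for the map $T$ built from Lemma~\ref{auxresult} (with the uniform $n$-dependent bound, compact embedding, and continuity via Lemma~\ref{alg} and dominated convergence), followed by comparison with $(u_n-u_{n+1})^+$ test functions for uniqueness and monotonicity, and the $n$-independent lower bound inherited from $u_1$. The only cosmetic difference is that the paper phrases the fixed-point step in the Schaefer/Leray--Schauder form (bounding the set $\{v:\lambda T(v)=v\}$) whereas you bound the range of $T$ directly; both are valid.
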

\begin{proof}
\textbf{Step $1$. (Existence)} We observe that for every $n\in\mathbb{N}$ and $v\in L^p(\Omega)$, the function 
$$
g_n(x):=\frac{f_n(x)}{\big(v^{+}+\frac{1}{n}\big)^{q(x)}}\in L^\infty(\Om)\setminus\{0\}
$$
is nonnegative. Therefore, by Lemma \ref{auxresult} we can define the map $T:L^p(\Omega)\to L^p(\Omega)$ by
$$
T(v)=w_n,
$$ where $w_n\in X\cap L^\infty(\Om)$ is the unique solution to the problem
\begin{equation}\label{approxexiaux}
\left.
\begin{aligned}
-\Delta_{H,p}w_n &=\frac{f_n(x)}{(v^{+}+\frac{1}{n})^{q(x)}}\text{ in }\Omega,\\
w_n&>0\text{ in }\Om,\,w_n=\text{ on }\partial\Om,
\end{aligned}\right\}
\end{equation}
such that for every $\omega\Subset\Om$ there exists a positive constant $c_{\omega}$ satisfying $w_n\geq c_{\omega}>0$ in $\omega$. Now, using $w_n$ as a test function in the problem \eqref{approxexiaux} and the fact that $q\in C(\overline{\Omega})$, we obtain
\begin{equation}\label{bdd}
\|w_n\|\leq Cn^\frac{\|q\|_{L^\infty(\Om)}+1}{p-1},
\end{equation}
for some constant $C=C(p,N,\Om)$. We define
$$
S:=\{v\in L^p(\Omega):\lambda T(v)=v\text{ for }0\leq\lambda\leq 1\}.
$$
Indeed, let $v_1,v_2\in S$. Then using \eqref{bdd} and Lemma \ref{embedding}
\begin{equation*}
\begin{split}
\|v_1-v_2\|_{L^p(\Omega)}&=\lambda\|T(v_1)-T(v_2)\|_{L^p(\Omega)}\\
&\leq 2\lambda CC_1 n^\frac{\|q\|_{L^\infty(\Om)}+1}{p-1},
\end{split}
\end{equation*}
where $C=C(p,N,\Om)$ is given by \eqref{bdd} and $C_1$ is the Sobolev constant. Therefore, the set $S$ is bounded in $L^p(\Omega)$.
To prove the continuity of $T$, let $v_k\to v$ strongly in $L^p(\Omega)$ and $T(v_k)=w_{n,k}$. Then, we have
\begin{equation}\label{cont1}
\int_{\Omega}H(\nabla w_{n,k})^{p-1} \nabla_{\eta}H(\nabla w_{n,k})\nabla(w_{n,k}-u_n)\,dx=\int_{\Omega}\frac{f_n(x)}{(v_k^{+}+\frac{1}{n})^{q(x)}}(w_{n,k}-w_n)\,dx,
\end{equation}
and
\begin{equation}\label{cont2}
\int_{\Omega}H(\nabla w_{n,k})^{p-1}\nabla_{\eta}H(\nabla w_{n,k})\nabla(w_{n,k}-w_n)\,dx=\int_{\Omega}\frac{f_n(x)}{(v^{+}+\frac{1}{n})^{q(x)}}(w_{n,k}-w_n)\,dx.
\end{equation}
Subtracting \eqref{cont2} from \eqref{cont1}, since $2\leq p<\infty$, we can use Lemma \ref{alg} to obtain
\begin{equation}\label{cont4}
\begin{split}
&\int_{\Omega}f_n(x)\left\{\left(v_k^{+}+\frac{1}{n}\right)^{-q(x)}-\left(v^{+}+\frac{1}{n}\right)^{-q(x)}\right\}(w_{n,k}-w_n)\,dx\\
&=\int_{\Omega}\left\{H(\nabla w_{n,k})^{p-1}\nabla_{\eta}H(\nabla w_{n,k})-H(\nabla w_n)^{p-1}\nabla_{\eta}H(\nabla w_n)\right\}\nabla(w_{n,k}-w_n)\,dx\\
&\geq c\int_{\Omega}H\left(\nabla(w_{n,k}-w_n)\right)^p\,dx,
\end{split}
\end{equation}
for some positive constant $c$. From H\"older's inequality, we obtain
\begin{equation}\label{cont6}
\begin{split}
&\int_{\Om}f_n(x)\left\{\big(v_k^{+}+\frac{1}{n}\big)^{-q(x)}-\left(v^{+}+\frac{1}{n}\right)^{-q(x)}\right\}(w_{n,k}-w_n)\,dx\\
&\leq n\left(\int_{\Om}\left|{\left(v_k^{+}+\frac{1}{n}\right)^{-q(x)}}-\left(v^{+}+\frac{1}{n}\right)^{-q(x)}\right|^\frac{p}{p-1}\,dx\right)^\frac{p-1}{p}\|w_{n,k}-w_n\|_{L^p(\Om)}.
\end{split}
\end{equation}
Since $v_k\to v$ strongly in $L^p(\Omega)$, it follows that upto a subsequence $v_k\to v$ almost everywhere in $\Omega$. Also, it is easy to observe that
$$
\Big|\big(v_k^{+}+\frac{1}{n}\big)^{-q(x)}-\big(v^{+}+\frac{1}{n}\big)^{-q(x)}\Big|\leq 2 n^{||q||_{L^\infty(\Om)}}.
$$
Since the limit is independent of the choice of the subsequence, applying the Lebsegue dominated convergence theorem, we obtain
\begin{equation}\label{cont7}
\lim_{k\to\infty}\int_{\Omega}\left|\left(v_k^{+}+\frac{1}{n}\right)^{-q(x)}-\left(v^{+}+\frac{1}{n}\right)^{-q(x)}\right|^\frac{p}{p-1}\,dx=0.
\end{equation}
Thus using \eqref{cont4} along with \eqref{cont6} and \eqref{cont7} it follows that $w_{n,k}\to w_n$ strongly in $X$. As a consequence, the mapping $T$ is continuous. The compactness of $T$ follows from the fact \eqref{bdd} and the compactness of the mapping $X\hookrightarrow L^p(\Om)$ which follows from Lemma \ref{embedding}. Thus by the Schauder fixed point theorem, $T$ has a fixed point, say $u_n\in X$ which  solves the problem \eqref{mainapprox}. Note that $u_n\geq 0$ in $\Om$.\\
\textbf{Step $2$. (Monotonicity)} Choosing $(u_n-u_{n+1})^{+}:=\max\{u_n-u_{n+1},0\}$ as a test function in \eqref{mainapprox} we have
\begin{equation}\label{mono1}
\int_{\Omega}H(\nabla u_n)^{p-1}\nabla_{\eta}H(\nabla u_n)\nabla(u_n-u_{n+1})^{+}\,dx=\int_{\Omega}\frac{f_n(x)}{\left(u_n+\frac{1}{n}\right)^{q(x)}}(u_n-u_{n+1})^{+}\,dx,
\end{equation}
and
\begin{equation}\label{mono2}
\int_{\Omega}H(\nabla u_{n+1})^{p-1}\nabla_{\eta}H(\nabla u_{n+1})\nabla(u_n-u_{n+1})^{+}\,dx=\int_{\Omega}\frac{f_{n+1}(x)}{\left(u_{n+1}+\frac{1}{n+1}\right)^{q(x)}}(u_n-u_{n+1})^{+}\,dx.
\end{equation}
Subtracting \eqref{mono2} from \eqref{mono1}, since $2\leq p<\infty$, using Lemma \ref{alg} we obtain
\begin{equation}\label{monosub}
\begin{split}
&\int_{\Omega}\left\{\frac{f_n(x)}{\left(u_n+\frac{1}{n}\right)^{q(x)}}-\frac{f_{n+1}(x)}{\left(u_{n+1}+\frac{1}{n+1}\right)^{q(x)}}\right\}(u_n-u_{n+1})^{+}\,dx\\
&=\int_{\Omega}\left\{H(\nabla u_n)^{p-1}\nabla_{\eta}H(\nabla u_n)-H(\nabla u_{n+1})^{p-1}\nabla_{\eta}H(\nabla u_{n+1})\right\}\nabla(u_n-u_{n+1})^{+}\,dx\\
&\geq c\int_{\Omega}H\left(\nabla(u_n-u_{n+1})^{+}\right)^p\,dx,
\end{split}
\end{equation}
 for some positive constant $c$. Since $f_n(x)\leq f_{n+1}(x)$ for almost every $x\in\Omega$, we get
\begin{equation}\label{monoeqn}
\begin{split}
&\int_{\Omega}\left\{\frac{f_n(x)}{\big(u_n+\frac{1}{n}\big)^{q(x)}}-\frac{f_{n+1}(x)}{\big(u_{n+1}+\frac{1}{n+1}\big)^{q(x)}}\right\}(u_n-u_{n+1})^{+}\,dx\\
&\leq\int_{\Omega}f_{n+1}(x)\frac{\big(u_{n+1}+\frac{1}{n+1}\big)^{q(x)}-\big(u_n+\frac{1}{n}\big)^{q(x)}}{\big(u_n+\frac{1}{n}\big)^{q(x)}\big(u_{n+1}+\frac{1}{n+1}\big)^{q(x)}} (u_n-u_{n+1})^{+}\,dx \leq 0.
\end{split}
\end{equation}
Hence from \eqref{monosub} and \eqref{monoeqn}, we have $u_{n+1}\geq u_n$ in $\Om$.\\
\textbf{Step $3$. (Uniqueness)} Let $n\in\mathbb{N}$ and $u_n,v_n\in X\cap L^\infty(\Om)$ solves the problem \eqref{mainapprox}. Then choosing $(u_n-v_n)^+:=\max\{u_n-v_n,0\}$ as a test function in \eqref{mainapprox} and proceeding similarly as the proof of Step $2$ above, we get $u_n\geq v_n$ in $\Omega$. Choosing $(v_n-u_n)^+:=\max\{v_n-u_n,0\}$ as a test function in \eqref{mainapprox}, we get $v_n\geq u_n$ in $\Omega$. This gives that $u_n=v_n$ in $\Om$.\\
\textbf{Step $4$. (Positivity)} From Step $1$, we infer that there exists a constant $c_{\omega}>0$ such that $u_1\geq c_{\omega}>0$, for every $\omega\Subset\Om$. Hence taking into account Step $2$, we have $u_n\geq c_{\omega}>0$ in $\omega$ for every $\omega\Subset\Om$, where $c_\omega>0$ is independent of $n$. Thus $u_n>0$ in $\Omega$ for all $n\in\mathbb{N}$, from which the result follows.
\end{proof}
\begin{Remark}\label{sinrmk}
We emphasize that if $\Delta_{H,p}=\Delta_p$ or $\mathcal{S}_p$ as given by \eqref{ex}, then noting Lemma \ref{AI}, the results in Lemma \ref{approxexi} will hold for any $1<p<\infty$.
\end{Remark}

\section{Preliminaries for the proof of Theorem \ref{thm3} and Theorem \ref{thm3new}}
Throughout this section, we assume $1<p<N$, unless otherwise mentioned. Here we prove some preliminary results required to prove Theorem \ref{thm3}-\ref{thm3new}.

Let us denote the energy functional $I_\la: X \to \mb R\cup \{\pm \infty\}$ corresponding to the problem \eqref{maineqn1} by
\[I_\la(u) := \frac{1}{p}\int_{\Omega}H(\nabla u)^p\,dx -\la \int_\Om \frac{(u^+)^{1-q(x)}(x)}{1-q(x)}~dx -\frac{1}{r+1}\int_\Om (u^+)^{r+1}~dx.\]
Now for $\epsilon>0$, we consider the following approximated problem
\begin{equation*}
(P_{\la,\e})\left\{
\begin{aligned}
  -\Delta_{H,p} u &=  \frac{\la}{(u^+ +\e)^{q(x)}}+ (u^+)^r\;\text{in}\; \Om,\\
 u&=0 \; \text{ on }\; \partial\Om,
\end{aligned}\right.
\end{equation*}
for which the corresponding energy functional is given by
\[I_{\la,\e}(u) = \frac{1}{p}\int_{\Omega}H(\nabla u)^p\,dx -\la\int_\Om \frac{[(u^+ +\e)^{1-q(x)}-\e^{1-q(x)}]}{1-q(x)}~dx -\frac{1}{r+1}\int_\Om (u^+)^{r+1}~dx.\]
It is easy to verify that $I_{\la,\e}\in C^1(X,\mb R)$, $I_{\la,\epsilon}(0)=0$ and $I_{\la,\epsilon}(v)\leq I_{0,\epsilon}(v)$, for all $ v \in X$.

Our next Lemma states that $I_{\la,\e}$ satisfies the Mountain Pass Geometry.
\begin{Lemma}\label{MP-geo}
There exists $R>0,\,\rho>0$ and $\Lambda>0$ depending on $R$ such that
$$
\inf\limits_{\|v\|\leq R}I_{\la,\e}(v)<0\;\text{and}\;
\inf\limits_{\|v\|=R}I_{\la,\e}(v)\geq \rho,\text{ for }\la\in(0,\Lambda).
$$
Moreover, there exists $T>R$ such that
$
I_{\la,\e}(Te_{1})<-1$ for $\la\in (0,\La)$, where $e_1$ is given by Lemma \ref{eigenfn}.
\end{Lemma}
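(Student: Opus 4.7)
The plan is to estimate each nonlinear term of $I_{\la,\e}$ by a power of $\|v\|$ and then exploit the ordering of exponents $1-q^- < p < r+1$, where I set $q^- := \min_{\ov{\Om}} q$ and $q^+ := \max_{\ov{\Om}} q$ with $0 < q^- \leq q^+ < 1$. The three claims correspond to three different regimes on the graph of $I_{\la,\e}$ and I handle them one at a time.

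For the sphere bound, I first estimate the two nonlinear pieces from above. Since $1-q(x) \in (0,1)$, subadditivity $(a+b)^{1-q(x)} \leq a^{1-q(x)} + b^{1-q(x)}$ gives $(v^+ + \e)^{1-q(x)} - \e^{1-q(x)} \leq (v^+)^{1-q(x)}$; splitting $\Om$ into $\{v^+ \leq 1\}$ and $\{v^+ > 1\}$ and applying Lemma \ref{embedding} yields
\[
\int_\Om \frac{(v^+ + \e)^{1-q(x)} - \e^{1-q(x)}}{1-q(x)}\,dx \leq \frac{1}{1-q^+}\bigl(|\Om| + C\|v\|^{1-q^-}\bigr).
\]
Similarly $\int_\Om (v^+)^{r+1}\,dx \leq C\|v\|^{r+1}$ by Lemma \ref{embedding}, since $r+1 \in (p, p^*)$. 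Combined with $\|v\|^p = \int_\Om H(\na v)^p\,dx$, this gives
\[
I_{\la,\e}(v) \geq \frac{\|v\|^p}{p} - \frac{\la}{1-q^+}\bigl(|\Om| + C\|v\|^{1-q^-}\bigr) - \frac{C}{r+1}\|v\|^{r+1}.
\]
I will choose $R > 0$ small enough that $\frac{R^p}{p} - \frac{C R^{r+1}}{r+1} \geq \frac{R^p}{2p}$ (possible because $r+1>p$), and then $\La > 0$ small enough that $\frac{\la}{1-q^+}(|\Om| + C R^{1-q^-}) \leq \frac{R^p}{4p}$ for every $\la \in (0,\La)$. This forces $\inf_{\|v\|=R} I_{\la,\e}(v) \geq \rho := R^p/(4p)$.

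For the interior strict negativity, my plan is to test along $v = te_1$ with $t > 0$ small, where $e_1$ is the positive first eigenfunction from Lemma \ref{eigenfn}. Because $e_1 > 0$ in $\Om$, the map $t \mapsto \frac{(te_1+\e)^{1-q(x)} - \e^{1-q(x)}}{1-q(x)}$ has right derivative $\e^{-q(x)} e_1$ at $t = 0$, so
\[
I_{\la,\e}(te_1) = -\la t \int_\Om \e^{-q(x)} e_1\,dx + O(t^p) + O(t^{r+1}) \quad (t \to 0^+).
\]
Since $p \geq 2$ and $r+1 > 1$, the linear-in-$t$ negative term dominates and $I_{\la,\e}(te_1) < 0$ for sufficiently small $t$; shrinking $t$ further to ensure $\|te_1\| < R$ completes this claim.

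Finally, since $1-q(x) > 0$ the singular contribution to $I_{\la,\e}(Te_1)$ is nonpositive, so
\[
I_{\la,\e}(Te_1) \leq \frac{T^p}{p}\|e_1\|^p - \frac{T^{r+1}}{r+1}\int_\Om e_1^{r+1}\,dx,
\]
which tends to $-\infty$ as $T \to \infty$ because $r+1 > p$, uniformly in $\la > 0$; picking $T > R$ large enough yields $I_{\la,\e}(Te_1) < -1$ for every $\la \in (0,\La)$. The main subtlety I anticipate is the singular term on the sphere: the variable exponent forces me to use both $q^-$ and $q^+$ to get a polynomial bound whose coefficients stay bounded as $\e \to 0$, which is what allows $R$ and $\La$ to be chosen independently of $\e$.
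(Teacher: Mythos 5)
Your proposal is correct and follows essentially the same route as the paper: bound $\int_\Om (v^+)^{r+1}\,dx$ by $C\|v\|^{r+1}$ via Sobolev embedding, bound the singular term by $(v^+)^{1-q(x)}$ using $(v^++\e)^{1-q(x)}-\e^{1-q(x)}\leq (v^+)^{1-q(x)}$, exploit $1-q < p < r+1$ to choose $R$ small and then $\Lambda$ small for the sphere estimate, obtain interior negativity from $\lim_{t\to 0^+}I_{\la,\e}(te_1)/t=-\la\int_\Om\e^{-q(x)}e_1\,dx<0$, and get $I_{\la,\e}(Te_1)\leq I_{0,\e}(Te_1)\to-\infty$. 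The only difference is cosmetic: you make the constant defining $\Lambda$ explicit through $q^-$ and $q^+$, whereas the paper defines $\Lambda$ via a supremum of the singular integral over the sphere $\|v\|=R$.
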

\begin{proof}
We fix $l=|\Om|^{\frac{1}{\left(\frac{p^*}{r+1}\right)'}}$. Then using H\"{o}lder's inequality and Lemma \ref{embedding}, for any $v\in X$ we get
\begin{equation*}\label{MP1}
\int_\Om (v^+)^{r+1}~dx \leq \left( \int_\Om |v|^{p^*}\right)^{\frac{r+1}{p*}} |\Om|^{\frac{1}{(\frac{p^*}{r+1})'}}\leq Cl\|v\|^{r+1},
\end{equation*}
for some positive constant $C$ independent of $v$. Observing
$$
\lim_{t\to 0}\frac{I_{\la,\e}(te_1)}{t}=-\la\int_{\Omega}\e^{-q(x)}e_{1}\,dx<0,
$$
we can choose $k\in(0,1)$ sufficiently small and set $\|v\|=R :=k(\frac{r+1}{pCl})^\frac{1}{r+1-p}$ such that 
$$
\inf\limits_{\|v\|\leq R}I_{\la,\e}(v)<0.
$$ 
Moreover, since $R<(\frac{r+1}{pCl})^\frac{1}{r+1-p}$, we obtain
\begin{align*}
I_{0,\e}(v)\geq \frac{R^p}{p}-\frac{ClR^{r+1}}{r+1}
:=
2\rho\,(\text{say})>0.
\end{align*}
We define $$\Lambda:=\frac{\rho}{\sup\limits_{\|v\|=R} \left(\displaystyle\frac{1}{1-q(x)}\int_\Om |v|^{1-q(x)}~dx \right)},$$
which is a positive constant and since $\rho,R$ depends on $k,r,p,|\Omega|,C$ so does $\Lambda$. We know that
\begin{equation}\label{knownfact}
(v^{+}+\e)^{1-q(x)}-\e^{1-q(x)}\leq (v^+)^{1-q(x)},
\end{equation}
which gives
\begin{align*}
I_{\lambda,\e}(v)&\geq \frac{1}{p}\int_{\Om}H(\nabla v)^p\,dx-\frac{1}{r+1}\int_{\Om}(v^{+})^{r+1}\,dx-\frac{\la}{1-q(x)}\int_{\Om}(v^{+})^{1-q(x)}\,dx\\
&= I_{0,\e}(v)-\frac{\la}{1-q(x)}\int_{\Om}(v^{+})^{1-q(x)}\,dx.
\end{align*}
Therefore
\begin{align*}
\inf\limits_{\|v\|=R} I_{\la,\e}(v)&\geq\inf\limits_{\|v\|=R}I_{0,\e}(v)-\la \sup\limits_{\|v\|=R} \left(\frac{1}{1-q(x)}\int_\Om |v|^{1-q(x)}~dx \right)\\
&\geq 2\rho -\la \sup\limits_{\|v\|=R} \left(\frac{1}{1-q(x)}\int_\Om |v|^{1-q(x)}~dx \right)\geq \rho,
\end{align*}
if $\la\in(0,\Lambda).$
Lastly, it is easy to see that $I_{0,\e}(te_1) \to -\infty$, as $t\to +\infty$ which implies that we can choose $T>R$ such that $I_{0,\e}(Te_1)<-1$. Hence
\[I_{\la,\e}(Te_1)\leq I_{0,\e}(Te_1)<-1\]
which completes the proof.
\end{proof}

\noi As a consequence of Lemma \ref{MP-geo}, we have
\[\inf\limits_{\|v\|=R}I_{\la,\e}(v) \geq \rho\,\max\{I_{\la,\e}(Te_1), I_{\la,\e}(0)\} = 0.\]
Our next Lemma ensures that $I_{\la,\e}$ satisfies the Palais Smale  $(PS)_c$ condition.

\begin{Proposition}\label{PS-cond}
Let $2\leq p<N$, then $I_{\la,\e}$ satisfies the $(PS)_c$ condition, for any $c \in \mb R$ that is if $\{u_k\}\subset X$ is a sequence satisfying
\begin{equation}\label{PS1}
I_{\la,\e}(u_k)\to c \; \text{and}\; I_{\la,\e}^\prime(u_k) \to 0
\end{equation}
as $k \to \infty$, then $\{u_k\}$ contains a strongly convergent subsequence in $X$.
\end{Proposition}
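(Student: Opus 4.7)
The plan is to follow the classical two-step recipe for $(PS)_c$: first bound the sequence in $X$, then upgrade weak to strong convergence via Lemma~\ref{alg}. Throughout, the key observation is that since $\e>0$ is fixed, the singular term $\la(u+\e)^{-q(x)}$ is actually a bounded nonlinearity (with $L^\infty$ norm controlled by $\la\e^{-\|q\|_\infty}$), so the only genuinely superlinear nonlinearity is $u^r$, which is subcritical because $r<p^{*}-1$.

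\textbf{Step 1 (Boundedness).} Let $\{u_k\}\subset X$ satisfy \eqref{PS1}. I would test with $u_k$ itself in $I'_{\la,\e}(u_k)$ (noting $\langle I'_{\la,\e}(u_k),u_k\rangle=o(\|u_k\|)$ by \eqref{PS1}) and form the combination
\[
I_{\la,\e}(u_k)-\frac{1}{r+1}\langle I'_{\la,\e}(u_k),u_k\rangle=\Bigl(\tfrac{1}{p}-\tfrac{1}{r+1}\Bigr)\|u_k\|^{p}+\la\,R_\e(u_k),
\]
where $R_\e(u_k)$ collects the singular contributions. Using Lemma~\ref{Happ}(A), identity \eqref{lbd}, the estimate \eqref{knownfact}, the pointwise bound $(t^+)^{1-q(x)}\leq 1+(t^+)^{1-q_-}$, and the Sobolev embedding $X\hookrightarrow L^{1-q_-}(\Om)$, I obtain $|R_\e(u_k)|\leq C(1+\|u_k\|^{1-q_-})$. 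Since $r+1>p>1>1-q_-$, the left-hand side is bounded by $c+1+o(\|u_k\|)$, yielding $\sup_k\|u_k\|<\infty$.

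\textbf{Step 2 (Extraction).} Passing to a subsequence, I get $u_k\rightharpoonup u$ in $X$, $u_k\to u$ in $L^{s}(\Om)$ for every $s\in[1,p^{*})$, and $u_k\to u$ a.e. Since $r+1<p^{*}$, strong $L^{r+1}$ convergence holds.

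\textbf{Step 3 (Vanishing of the duality bracket).} I will show that
\[
\mathcal{E}_k\eqdef\int_{\Om}H(\nabla u_k)^{p-1}\nabla_\eta H(\nabla u_k)\cdot\nabla(u_k-u)\,dx\longrightarrow 0.
\]
From $\|I'_{\la,\e}(u_k)\|_{X^\ast}\to 0$ and boundedness of $\{u_k-u\}$, $\langle I'_{\la,\e}(u_k),u_k-u\rangle\to 0$. Rewriting this bracket, I need the remaining two pieces to vanish. For the singular piece: $(u_k^{+}+\e)^{-q(x)}\leq \max\{\e^{-q^-},\e^{-q^+}\}$, so it is uniformly in $L^\infty(\Om)$, and $u_k\to u$ in $L^{1}(\Om)$ gives $\int(u_k^{+}+\e)^{-q(x)}(u_k-u)\,dx\to 0$. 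For the Sobolev piece: $(u_k^{+})^{r}$ is bounded in $L^{(r+1)/r}(\Om)$, and $u_k\to u$ strongly in $L^{r+1}(\Om)$, so $\int(u_k^{+})^{r}(u_k-u)\,dx\to 0$. Hence $\mathcal{E}_k\to 0$.

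\textbf{Step 4 (Strong convergence).} Because $\{H(\nabla u)^{p-1}\nabla_\eta H(\nabla u)\}\in L^{p'}(\Om;\R^{N})$ by Corollary~\ref{regrmk}\eqref{ubd} and $\nabla(u_k-u)\rightharpoonup 0$ in $L^{p}(\Om;\R^{N})$, I also have $\int H(\nabla u)^{p-1}\nabla_\eta H(\nabla u)\cdot\nabla(u_k-u)\,dx\to 0$. Subtracting and applying Lemma~\ref{alg} (which requires $p\geq 2$, consistent with the hypothesis),
\[
0\leq c\int_{\Om}H\bigl(\nabla(u_k-u)\bigr)^{p}\,dx\leq \int_{\Om}\bigl\langle H(\nabla u_k)^{p-1}\nabla_\eta H(\nabla u_k)-H(\nabla u)^{p-1}\nabla_\eta H(\nabla u),\nabla(u_k-u)\bigr\rangle\,dx\longrightarrow 0,
\]
which by the equivalence of $H(\nabla\cdot)$ with $|\nabla\cdot|$ from Remark~\ref{hypormk2} yields $\|u_k-u\|\to 0$, completing the proof.

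The only delicate step is Step 1, where one must verify that the sum of the two sub-linear singular contributions really is absorbed by the leading $\|u_k\|^{p}$ term; this uses $0<q(x)<1$ in an essential way. Step 3 is routine thanks to the $\e$-regularization, and Step 4 is exactly the Finsler analogue of the standard Minty-type argument encoded in Lemma~\ref{alg}.
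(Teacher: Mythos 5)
Your proposal is correct and follows essentially the same route as the paper's proof: the combination $I_{\la,\e}(u_k)-\frac{1}{r+1}\langle I'_{\la,\e}(u_k),u_k\rangle$ together with the splitting of $\int_\Om |u_k|^{1-q(x)}\,dx$ over $\{|u_k|\ge 1\}$ and $\{|u_k|<1\}$ for boundedness, then compact embeddings, the uniform bound on $(u_k^++\e)^{-q(x)}$, and subcriticality of $r+1$ to show the monotonicity pairing vanishes, and finally a strict-monotonicity inequality for the Finsler $p$-Laplacian to upgrade to strong convergence (you invoke Lemma~\ref{alg} where the paper invokes Corollary~\ref{dualthmcor}; both are valid for $p\ge 2$ and yours is if anything more direct). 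The only blemish is that your stated bound $|R_\e(u_k)|\le C(1+\|u_k\|^{1-q_-})$ omits the $O(\|u_k\|)$ contribution coming from $\frac{1}{r+1}\int_\Om (u_k^++\e)^{-q(x)}u_k\,dx$, but since that term is still $o(\|u_k\|^p)$ the boundedness conclusion is unaffected.
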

\begin{proof}
Let $\{u_k\} \subset X$ satisfies \eqref{PS1} then we claim that $\{u_k\}$ must be bounded in $X$. To see this using \eqref{knownfact}, we obtain
\begin{equation}\label{PS2}
\begin{split}
I_{\la,\e}(u_k)- \frac{1}{r+1}I_{\la,\e}^\prime(u_k)u_k &= \left( \frac{1}{p}-\frac{1}{r+1}\right)
\int_{\Omega}H(\nabla u_k)^p\,dx -{\la}\int_\Om \frac{(u_k^+ +\e)^{1-q(x)}-\e^{1-q(x)}}{1-q(x)}~dx\\
& \quad +\frac{\la}{r+1}\int_\Om (u_k^+ +\e)^{-q(x)}u_k~dx\\
& \geq \left( \frac{1}{p}-\frac{1}{r+1}\right)\int_{\Omega}H(\nabla u_k)^p\,dx -\la\int_\Om \frac{(u_k^+)^{1-q(x)}}{1-q(x)}~dx\\
&\quad \quad+ \frac{\la}{r+1}\int_\Om (u_k^+ +\e)^{-q(x)}u_k~dx\\
& \geq \left( \frac{1}{p}-\frac{1}{r+1}\right)\int_{\Omega}H(\nabla u_k)^p\,dx -\la\int_\Om \frac{(u_k^+)^{1-q(x)}}{1-q(x)}~dx-\frac{\la C}{\epsilon(r+1)}\|u_k\|,
\end{split}
\end{equation}
for some positive constant $C$ (independent of $k$) where we have used Lemma \ref{embedding} and the fact $0<q(x)<1$ in $\overline{\Om}$. Due to the same reasoning, we obtain
\begin{equation}\label{PS2-new}
\begin{split}
-\int_\Om \frac{(u_k^+)^{1-q(x)}}{1-q(x)}~dx &\geq -\int_\Om \frac{|u_k|^{1-q(x)}}{1-q(x)}~dx\\
& \geq \frac{-1}{1-\|q\|_{L^\infty(\Om)}}
 \left( \int_{\Om \cap \{|u_k|\geq 1\}}|u_k|^{1-q(x)}dx+ \int_{\Om \cap \{|u_k|< 1\}}|u_k|^{1-q(x)}dx\right) \\
 & \geq \frac{-1}{1-\|q\|_{L^\infty(\Om)}}  \left( \int_{\Om \cap \{|u_k|\geq 1\}}|u_k|dx+ \int_{\Om \cap \{|u_k|< 1\}}|u_k|^{1-\|q\|_{L^\infty(\Om)}}dx\right) \\
& \geq -C\left(\|u_k\|+\|u_k\|^{1-\|q\|_{L^\infty(\Om)}}\right),
\end{split}
\end{equation}
for some positive constant $C$ independent of $k$. Thus inserting \eqref{PS2-new} into \eqref{PS2} and using the fact $r+1>p$ along with Remark \ref{hypormk2}, we get
\begin{equation}\label{PS2-new1}
I_{\la,\e}(u_k)- \frac{1}{r+1}I_{\la,\e}^\prime(u_k)u_k \geq C_1\|u_k\|^p -C\left(\|u_k\|+\|u_k\|^{1-\|q\|_{L^\infty(\Om)}}\right),
\end{equation}
for some positive constant $C_1$ (independent of $k$). Also from \eqref{PS1} it follows that for $k$ large enough
\begin{equation}\label{PS3}
\left| I_{\la,\e}(u_k)- \frac{1}{r+1}I_{\la,\e}^\prime(u_k)u_k\right| \leq C+o(\|u_k\|),
\end{equation}
for some positive constant $C$ (independent of $k$).
Combining \eqref{PS2-new1} and \eqref{PS3}, our claim follows since $p>1$. By reflexivity of $X$, there exists $u_0\in X$ such that up to a subsequence, $u_k \rightharpoonup u_0$ weakly in $X$ as $k \to \infty$.\\
\textbf{Claim:} $u_k \to u_0$ strongly in $X$ as $k \to \infty$.\\
By \eqref{PS1}, we have that
\[\lim_{k\to \infty}\left(\mc \int_{\Om}H(\nabla u_k)^{p-1}\nabla _{\eta}H(\nabla u_k)\nabla u_0\,dx - \la \int_\Om (u_k^+ +\e)^{-q(x)}u_0~dx - \int_\Om (u_k^+)^{r} u_0~dx\right)=0\]
and
\[\lim_{k\to \infty}\left(\mc \int_{\Om}H(\nabla u_k)^{p-1}\nabla _{\eta}H(\nabla u_k)\nabla u_k\,dx - \la \int_\Om (u_k^+ +\e)^{-q(x)}u_k~dx - \int_\Om (u_k^+)^r u_k~dx\right)=0.\]
Now
\begin{equation}\label{PS4}
\begin{split}
&\lim\limits_{k\to\infty}\int_{\Omega}\big(H(\nabla u_k)^{p-1}\nabla_{\eta}H(\nabla u_k)-H(\nabla u_0)^{p-1}\nabla_{\eta}H(\nabla u_0)\big).\nabla(u_k-u_0)\,dx\\
&=\lim\limits_{k\to\infty} \left( \la \int_\Om (u_k^+ +\e)^{-q}u_k~dx + \int_\Om (u_k^+)^r u_k~dx - \la \int_\Om (u_k^+ +\e)^{-q}u_0~dx - \int_\Om (u_k^+)^r u_0~dx\right)\\
&\quad -\lim_{k\to \infty}\left(\int_\Om H(\nabla u_0)^{p-1}\nabla_{\eta}H(\nabla u_0). \nabla u_k~dx - \int_\Om H(\nabla u_0)^p~dx\right).
\end{split}
\end{equation}
From weak convergence of $\{u_k\}$ we get
\begin{equation}\label{PS5}
\lim_{k\to \infty}\left(\int_\Om H(\nabla u_0)^{p-1}\nabla_{\eta}H(\nabla u_0)\nabla u_k~dx - \int_\Om H(\nabla u_0)^p~dx\right)=0.
\end{equation}
Also, we have
\begin{align*}
\left|(u_k^++\e)^{-q(x)}u_0\right| \leq\e^{-q(x)}u_0\text{ and }
\int_\Om \left|\e^{-q(x)}u_0\right|dx \leq \|\e^{-q(x)}\|_{L^\infty(\Om)}\int_\Om|u_0|~dx< +\infty.
\end{align*}
Thus Lebesgue Dominated convergence theorem gives that
\begin{equation}\label{PS6}
\lim_{k \to \infty} \int_\Om (u_k^+ +\e)^{-q(x)}u_0~dx = \int_\Om (u_0^+ +\e)^{-q(x)}u_0~dx.
\end{equation}
 Since $u_k \to u_0$ pointwise in $\Om$, for any measurable subset $E$ of $\Om$ we have
\begin{equation*}
\begin{split}
\int_E |(u_k^++\e)^{-q(x)}u_k |~dx&\leq \int_E\e^{-q(x)}u_k~dx\leq\|\e^{-q(x)}\|_{L^\infty(\Om)}\|u_k\|_{L^{p^*}(\Om)}|E|^{\frac{p^*-1}{p^*}}\leq C(\e)|E|^{\frac{p^*-1}{p^*}},
\end{split}
\end{equation*}
so from Vitali convergence theorem, it follows that
\begin{equation}\label{PS7}
\lim\limits_{k\to\infty} \la \int_\Om (u_k^+ +\e)^{-q(x)}u_k~dx = \la \int_\Om (u_0^+ +\e)^{-q(x)}u_0~dx.
\end{equation}
Similarly, due to $r+1<p^*$, we have
\[\int_E |(u_k^+)^ru_0|~dx \leq \|u_0\|_{L^{p^*}(\Om)} \left(\int_E (u_k^+)^{rp^*{'}}~dx\right)^{\frac{1}{p^*{'}}}\leq C_3 |E|^{\alpha}  \]
and
\[\int_E |(u_k^+)^ru_k|~dx \leq \|u_k\|_{L^{p^*}(\Om)} \left(\int_E (u_k^+)^{rp^*{'}}~dx\right)^{\frac{1}{p*{'}}}\leq C_4 |E|^{\beta}  \]
for some positive constants $C_3,C_4,\alpha$ and $\beta$. Therefore Vitali convergence theorem gives
\begin{equation}\label{PS8}
\lim_{k \to \infty} \int_\Om (u_k^+)^ru_0~dx  =\int_\Om (u_0^+)^ru_0~dx,
\end{equation}
and
\begin{equation}\label{PS9}
\lim_{k \to \infty} \int_\Om (u_k^+)^ru_k~dx  =\int_\Om (u_0^+)^ru_0~dx.
\end{equation}
Using \eqref{PS5}, \eqref{PS6}, \eqref{PS7}, \eqref{PS8} and \eqref{PS9} in \eqref{PS4}, we obtain
\[\lim\limits_{k\to\infty}\int_{\Omega}\big(H(\nabla u_k)^{p-1}\nabla _{\eta}H(\nabla u_k)-H(\nabla u_0)^{p-1}\nabla _{\eta}H(\nabla u_0)\big).\nabla(u_k-u_0)\,dx =0.\]
Then using \eqref{dualthmapp3} from Corollary \ref{dualthmcor}, we obtain $u_k\to u_0$ strongly in $X$ as $k\to\infty$ which proves our claim. 
\end{proof}
\begin{Remark}\label{multrmk}
From Lemma \ref{MP-geo}, Proposition \ref{PS-cond} and Mountain Pass Lemma, for every $\la\in(0,\Lambda)$, there exists a $\zeta_\e \in X$ such that $I_{\lambda,\e}^\prime(\zeta_\e)=0$ {and}
$$
I_{\lambda,\e}(\zeta_{\e})=\inf_{\gamma\in\Gamma}\max_{t \in [0,1]}I_{\la,\e}(\gamma (t)) \geq \rho >0,
$$
where 
$$
\Gamma =\big\{\gamma \in C([0,1],X):\gamma(0)=0, \gamma(1)=Te_1\big\}.
$$
Using \eqref{PS2-new} together with Vitali convergence theorem, if $u_k\rightharpoonup u_0$ weakly in $X$, then we have
$$
\lim_{k\to\infty}\int_{\Om}\frac{(u_k+\e)^{1-q(x)}-\e^{1-q(x)}}{1-q(x)}\,dx=\int_{\Om}\frac{(u_0+\e)^{1-q(x)}-\e^{1-q(x)}}{1-q(x)}\,dx.
$$
Therefore, $I_{\la,\e}$ is weakly lower semicontinuous. Furthermore, as a consequence of Lemma \ref{MP-geo}, since for every $\la\in(0,\Lambda)$ we have $\inf\limits_{\|v\|\leq R} I_{\la,\e}(v)<0$, there exists a nonzero $\nu_\e\in X$ such that $\|\nu_\e\| \leq R$ and
\begin{equation}\label{limit-pass}
\inf\limits_{\|v\|\leq R} I_{\la,\e}(v) =I_{\la,\e}(\nu_\e)<0<\rho \leq I_{\la,\e}(\zeta_\e).
\end{equation}
Thus, $\zeta_\e$ and $\nu_\e$ are two different non trivial critical points of $I_{\la,\e}$, provided $\la\in(0,\Lambda)$.
\end{Remark}

\begin{Lemma}\label{non-negative}
Let $2\leq p<N$, then the critical points $\zeta_\e$ and $\nu_\e$ of $I_{\la,\e}$ are nonnegative in $\Omega.$
\end{Lemma}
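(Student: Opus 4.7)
The plan is to test the weak form of the Euler--Lagrange equation $I_{\la,\e}^\prime(u)=0$ (where $u$ stands for either $\zeta_\e$ or $\nu_\e$) against the negative part $u_-=\min\{u,0\}\in X$, and exploit the fact that both the singular term and the power term in $I_{\la,\e}$ see only $u^+$. Since $I_{\la,\e}\in C^1(X,\R)$, the identity $\langle I_{\la,\e}^\prime(u),u_-\rangle=0$ holds directly for every $\phi\in X$, so no approximation-of-test-function step is needed (unlike Lemma \ref{testfn}). Explicitly,
\begin{equation*}
\int_{\Om}H(\nabla u)^{p-1}\nabla_{\eta}H(\nabla u)\cdot\nabla u_-\,dx
=\int_{\Om}\left[\frac{\la}{(u^++\e)^{q(x)}}+(u^+)^r\right]u_-\,dx.
\end{equation*}

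Next I would analyze both sides on the set $\{u<0\}$, which carries all the mass of $u_-$. On $\{u<0\}$ we have $u^+=0$, hence $(u^+)^r u_-\equiv 0$ on $\Om$ and $(u^++\e)^{-q(x)}=\e^{-q(x)}$ on the support of $u_-$. Since $u_-\le 0$ and $\la,\e>0$, this makes the right-hand side nonpositive:
\begin{equation*}
\int_{\Om}\Big[\tfrac{\la}{(u^++\e)^{q(x)}}+(u^+)^r\Big]u_-\,dx
=\la\int_{\{u<0\}}\e^{-q(x)}u_-\,dx\le 0.
\end{equation*}

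For the left-hand side, because $\nabla u_-=\nabla u\,\chi_{\{u<0\}}$, property (A) of Lemma \ref{Happ} gives $H(\nabla u)^{p-1}\nabla_{\eta}H(\nabla u)\cdot\nabla u=H(\nabla u)^p$ pointwise on $\{u<0\}$. Combined with the lower bound $H(x)^p\ge C_1|x|^p$ from \eqref{lbd} (Remark \ref{hypormk2}), we get
\begin{equation*}
\int_{\Om}H(\nabla u)^{p-1}\nabla_{\eta}H(\nabla u)\cdot\nabla u_-\,dx
=\int_{\Om}H(\nabla u_-)^p\,dx
\ge C_1\int_{\Om}|\nabla u_-|^p\,dx.
\end{equation*}
Chaining the two sides yields $C_1\|u_-\|_{W_0^{1,p}(\Om)}^p\le 0$, which forces $\nabla u_-=0$ a.e.\ and hence, via Poincaré in $W_0^{1,p}(\Om)$, $u_-\equiv 0$ in $\Om$. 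That is, $u\ge 0$ a.e.\ in $\Om$, which is exactly the claim for both $\zeta_\e$ and $\nu_\e$.

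There is no serious obstacle: the whole argument is the standard ``test against the negative part'' trick, and the only point requiring minimal care is making sure the Finsler ingredients behave as expected under truncation, i.e.\ that $\nabla_{\eta}H(\nabla u)\cdot\nabla u_-$ reproduces $H(\nabla u_-)^p$ on $\{u<0\}$; this is immediate from Lemma \ref{Happ}(A) together with the zero-homogeneity of $\nabla_\eta H$ recorded in Lemma \ref{Happ}(B). The argument is independent of whether $u$ is the mountain-pass critical point $\zeta_\e$ or the local-minimum critical point $\nu_\e$, so the single computation covers both cases.
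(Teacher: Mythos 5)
Your proof is correct and follows essentially the same route as the paper, which simply tests $(P_{\la,\e})$ with $\min\{v_\e,0\}$ and invokes the computation already carried out in Lemma \ref{auxresult} (nonpositivity of the right-hand side against $u_-$, coercivity of the left-hand side via Lemma \ref{Happ}(A) and \eqref{lbd}). Your write-up just makes that one-line reference explicit.
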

\begin{proof}
Testing $(P_{\la,\e})$ with $\min\{\zeta_\e,0\}$ and $\min\{\nu_\e,0\}$, proceeding similarly as in the proof of Lemma \ref{auxresult}, we have $\zeta_\e,\nu_\e\geq 0$ in $\Om$.
\end{proof}
\begin{Remark}\label{nonnegrmk}
As in Remark \ref{sinrmk}, if $\Delta_{H,p}=\Delta_p$ or $\mathcal{S}_p$ as given by \eqref{ex}, then noting Lemma \ref{AI}, we have Proposition \ref{PS-cond} and Lemma \ref{non-negative} valid for any $1<p<N$.
\end{Remark}
\begin{Lemma}\label{apriori}
There exists a constant $\Theta>0$ (independent of $\e$) such that $\|v_\e\| \leq \Theta$, where $v_\e = \zeta_\e$ or $\nu_\e$.
\end{Lemma}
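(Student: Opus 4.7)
The bound for $\nu_\e$ is immediate from Remark \ref{multrmk}, since $\nu_\e$ is chosen in the closed ball $\{\|v\|\leq R\}$ where $R$ depends only on $k,r,p,|\Om|,C$ (Lemma \ref{MP-geo}); in particular $R$ is independent of $\e$. Thus it suffices to bound $\zeta_\e$ uniformly in $\e$.

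The plan is to combine an $\e$-independent upper bound on the minimax level with an $\e$-independent coercive lower bound coming from the identity $I_{\la,\e}(\zeta_\e)=I_{\la,\e}(\zeta_\e)-\frac{1}{r+1}I_{\la,\e}^{\prime}(\zeta_\e)\zeta_\e$. For the upper bound, I would use the fixed path $\gamma_0(t)=tTe_1\in\Gamma$ with $T$ as in Lemma \ref{MP-geo}. Since $(u^++\e)^{1-q(x)}-\e^{1-q(x)}\geq 0$, one has $I_{\la,\e}(v)\leq I_{0,\e}(v)$ for every $v\in X$, so
\[
I_{\la,\e}(\zeta_\e)\leq\max_{t\in[0,1]}I_{\la,\e}(\gamma_0(t))\leq\max_{t\in[0,1]}\left(\frac{t^pT^p}{p}\|e_1\|^p-\frac{t^{r+1}T^{r+1}}{r+1}\int_\Om e_1^{r+1}\,dx\right)=:M,
\]
and $M$ is independent of $\e$.

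For the lower bound I would repeat the computation from \eqref{PS2}, but this time avoiding the crude estimate $(\zeta_\e+\e)^{-q(x)}\leq\e^{-\|q\|_{L^\infty(\Om)}}$. Using $I_{\la,\e}^{\prime}(\zeta_\e)=0$, $\zeta_\e\geq 0$ (Lemma \ref{non-negative}), $r+1>p$, and Remark \ref{hypormk2},
\[
I_{\la,\e}(\zeta_\e)\geq\left(\frac{1}{p}-\frac{1}{r+1}\right)C_1\|\zeta_\e\|^p-\la\int_\Om\frac{(\zeta_\e+\e)^{1-q(x)}-\e^{1-q(x)}}{1-q(x)}\,dx+\frac{\la}{r+1}\int_\Om(\zeta_\e+\e)^{-q(x)}\zeta_\e\,dx.
\]
The last integral is nonnegative and may be dropped. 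For the middle term, since $0<1-q(x)<1$ on $\ov\Om$, the subadditivity $(a+b)^s\leq a^s+b^s$ ($0<s<1$) gives $(\zeta_\e+\e)^{1-q(x)}-\e^{1-q(x)}\leq\zeta_\e^{1-q(x)}$, and then the same splitting $\{\zeta_\e\geq 1\}\cup\{\zeta_\e<1\}$ used in \eqref{PS2-new} combined with Lemma \ref{embedding} yields
\[
\int_\Om\frac{(\zeta_\e+\e)^{1-q(x)}-\e^{1-q(x)}}{1-q(x)}\,dx\leq C\bigl(1+\|\zeta_\e\|+\|\zeta_\e\|^{1-\|q\|_{L^\infty(\Om)}}\bigr),
\]
with $C$ independent of $\e$. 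Combining the two bounds,
\[
\left(\frac{1}{p}-\frac{1}{r+1}\right)C_1\|\zeta_\e\|^p\leq M+\la C\bigl(1+\|\zeta_\e\|+\|\zeta_\e\|^{1-\|q\|_{L^\infty(\Om)}}\bigr),
\]
and since $p>1>1-\|q\|_{L^\infty(\Om)}$, a standard Young/comparison argument produces an $\e$-independent constant $\Theta>0$ with $\|\zeta_\e\|\leq\Theta$.

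The main obstacle (and the reason we cannot simply reuse the estimate in \eqref{PS2}) is precisely the control of the singular term $\int(\zeta_\e+\e)^{-q(x)}\zeta_\e\,dx$ independently of $\e$: the bound $(\zeta_\e+\e)^{-q(x)}\leq\e^{-\|q\|_\infty}$ used in Proposition \ref{PS-cond} blows up as $\e\to 0$. The device above replaces it with the $\e$-uniform inequalities $(\zeta_\e+\e)^{-q(x)}\zeta_\e\geq 0$ on one side and $(\zeta_\e+\e)^{1-q(x)}-\e^{1-q(x)}\leq \zeta_\e^{1-q(x)}$ on the other, which is what makes the argument pass to the limit $\e\to 0^+$ later on.
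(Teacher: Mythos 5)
Your proposal is correct and follows essentially the same route as the paper: the bound for $\nu_\e$ is immediate from $\|\nu_\e\|\le R$, the $\e$-independent upper bound is $A=\max_{t\in[0,1]}I_{0,\e}(tTe_1)$ obtained from the fixed path $tTe_1$ and $I_{\la,\e}\le I_{0,\e}$, and your identity $I_{\la,\e}(\zeta_\e)-\frac{1}{r+1}I_{\la,\e}'(\zeta_\e)\zeta_\e$ is exactly the paper's step of testing $(P_{\la,\e})$ with $-\zeta_\e/(r+1)$ and adding, after which both arguments drop the nonnegative singular term, estimate the remaining integral via \eqref{knownfact} and \eqref{PS2-new}, and conclude from $r+1>p$.
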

\begin{proof}
The result trivially holds if $v_\e = \nu_\e$ so we deal with the case $v_\e= \zeta_\e$. Recalling the terms from Lemma \ref{MP-geo}, we define $A = \max\limits_{t \in [0,1]}I_{0,\e}(tTe_1)$ then
\[A \geq \max_{t \in [0,1]} I_{\la,\e}(tTe_1) \geq\inf_{\gamma\in\Gamma}\max_{t \in [0,1]}I_{\la,\e}(\gamma (t)) = I_{\la,\e}(\zeta_\e)\geq \rho>0>I_{\la,\e}(\nu_\e).\]
Therefore
\begin{equation}\label{ap1}
\frac{1}{p}\int_{\Om}H(\nabla\zeta_\e)^p\,dx-{\la}\int_\Om \frac{(\zeta_\e +\e)^{1-q(x)}-\e^{1-q(x)}}{1-q(x)}~dx -\frac{1}{r+1}\int_\Om \zeta_\e^{r+1}~dx \leq A.
\end{equation}
	Choosing $\phi=-\frac{\zeta_\e}{p+1}$ as a test function in $(P_{\la,\e})$ we obtain
\begin{equation}\label{ap2}
-\frac{1}{r+1}\int_{\Om}H(\nabla\zeta_\e)^p\,dx+\frac{\la}{r+1}\int_{\Om}\frac{\zeta_\e}{(\zeta_\e+\e)^{q(x)}}\,dx+\frac{1}{r+1}\int_{\Om}\zeta_\e^{r+1}\,dx=0.
\end{equation}
Adding \eqref{ap1} and \eqref{ap2} we get
\begin{align*}
\left(\frac{1}{p}-\frac{1}{r+1}\right)\int_{\Om}H(\nabla\zeta_\e)^p\,dx
 &\leq {\la}\int_\Om \frac{(\zeta_\e +\e)^{1-q(x)}-\e^{1-q(x)}}{1-q(x)}~dx -\frac{\la}{r+1}\int_{\Om}\frac{\zeta_\e}{(\zeta_\e+\e)^{q(x)}}\,dx+A\\
 & \leq {\la}\int_\Om \frac{(\zeta_\e +\e)^{1-q(x)}-\e^{1-q(x)}}{1-q(x)}~dx +A\\
 & \leq C(\|\zeta_\e\|+\|\zeta_\e\|^{1-\|q\|_{L^\infty(\Om)}})+A,
\end{align*}
for some positive constant $C$ being independent of $\e$, where the last inequality is deduced using the estimate \eqref{PS2-new}, H\"older inequality along Lemma \ref{embedding}. Therefore, using $r+1>p$, the sequence $\{\zeta_\e\}$ is uniformly bounded in $X$ with respect to $\e$. This completes the proof.
\end{proof}

\section{Proof of the main results}
\textbf{Proof of Theorem \ref{thm1}:}
\begin{enumerate}
\item[(a)]\textbf{Uniqueness:}
Let $u,v\in X$ be two distinct weak solutions of $\eqref{maineqn}$. Then by Lemma \ref{testfn} we can choose $\phi=(u-v)^{+}$ as a test function in \eqref{psingtest} to get
$$
\langle-\Delta_{H,p} u + \Delta_{H,p} v,(u-v)^{+}\rangle=\int_{\Omega}f\left(\frac{1}{u^{q(x)}}-\frac{1}{v^{q(x)}}\right)(u-v)^{+}\,dx\leq 0.
$$
Using Lemma \ref{alg} we obtain $u\leq v$ in $\Omega$. Similarly, choosing $\phi=(v-u)^{+}$ as a test function in \eqref{psingtest} we get $v\leq u$ in $\Omega$. Hence the uniqueness follows.
\item[(b)]\textbf{Existence:} Let $u_n\in X$ denotes the weak solution of \eqref{mainapprox} then choosing $u_n$ as a test function in \eqref{mainapprox} and using Remark \ref{hypormk2}, we get
$$
\int_{\Om}H(\nabla u_n)^p\,dx\leq \int_{\Om}\frac{f_n(x)u_n}{\big(u_n+\frac{1}{n}\big)^{q(x)}}\,dx.
$$
Now denote by $\omega_{\delta}=\Omega\setminus\overline{\Omega_{\delta}}$ then by Lemma \ref{approxexi}, we get $u_n\geq c_{\omega_{\delta}}>0$ in $\omega_{\delta}$. We observe that
\begin{align*}
\int_{\Om}\frac{f_n(x)u_n}{(u_n+\frac{1}{n})^{q(x)}}\,dx
&=\int_{\overline{{\Om}_{\delta}}}\frac{f_n(x)u_n}{(u_n+\frac{1}{n})^{q(x)}}\,dx+\int_{\omega_{\delta}}\frac{f_n(x)u_n}{(u_n+\frac{1}{n})^{q(x)}}\,dx\\
&\leq \int_{\overline{{\Om}_{\delta}}}f(x)u_n^{1-q(x)}\,dx+ \int_{\omega_{\delta}}\frac{f(x)}{u_n^{q(x)}}u_n\,dx\\
&\leq \int_{\overline{{\Om}_{\delta}}\cap\{u_n\leq 1\}}f(x)\,dx+\int_{{\overline{{\Om}_{\delta}}}\cap\{{u_n\geq 1\}}}f(x)u_n\,dx+\int_{\omega_{\delta}}\frac{f(x)}{c_{\omega_{\delta}}^{q(x)}}u_n\,dx\\
&\leq \|f\|_{L^1(\Om)}+\big(1+\|c_{\omega_{\delta}}^{-q(x)}\|_{L^\infty(\Om)}\big)\int_{\Om}f(x)u_n\,dx.
\end{align*}
Let $2\leq p<N$. Then, using H$\ddot{\text{o}}$lder's inequality and Lemma \ref{embedding} we obtain for $f\in L^m(\Om)$ with $m=(p^{*})'$
$$
\|u_n\|^{p}\leq \|f\|_{L^1(\Om)}+C\|f\|_{L^m(\Om)}\|u_n\|,
$$
which implies that the sequence $\{u_n\}$ is uniformly bounded in $X$. Thus up to a subsequence, by Lemma \ref{embedding} for $1\leq t<p^{*}$, we get 
\begin{align*}
u_n&\rightharpoonup u\text{ weakly in } X,\\
\,u_n&\to u\text{ strongly in } L^t(\Om),
\text{ and }\\
u_n&\to u\text{ pointwise in }\Om\text{ as }n \to \infty.
\end{align*}
Let $\phi\in C_c^{\infty}(\Omega)$ be such that $\mathrm{supp}\,\phi=\omega$ where $\omega \Subset \Om$. Then by Lemma \ref{approxexi}, there exists a constant $c_{\omega}>0$ independent of $n$ such that $u_n\geq c_{\omega}>0$ in $\omega$. Hence, we get
$$
\frac{f_n}{\left(u_n+\frac{1}{n}\right)^{q(x)}}\phi \leq c_{\omega}^{-q(x)}f(x)\phi(x)\in L^1(\Om).
$$
Thus we can apply Boccardo-Murat \cite[Theorem 2.1]{BocMu} to obtain that upto a subsequence still denoted by $\{u_n\}$, we have
$$
\lim_{n\to\infty}\nabla u_n(x)=\nabla u(x)\text{ for }x\text{ in }\Omega.
$$
Hence, we have 
$$
\lim_{n\to\infty}H(\nabla u_n(x))^{p-1}\nabla_{\eta}H\big(\nabla u_n(x)\big)=H(\nabla u(x))^{p-1}\nabla_{\eta}H\big(\nabla u(x)\big)\text{ for }x\text{ in }\Omega.
$$
Moreover, by Corollary \ref{regrmk} we observe that
$$
\|H(\nabla u_n)^{p-1}\nabla_{\eta}H(\nabla u_n)\|_{L^\frac{p}{p-1}(\Omega)}^\frac{p}{p-1}\leq\|u_n\|^p\leq C,
$$
for some positive constant $C$ which is independent of $n$.
As a consequence, the sequence
$$
H(\nabla u_n)^{p-1}\nabla_{\eta}H(\nabla u_n)\rightharpoonup H(\nabla u)^{p-1}\nabla_{\eta}H(\nabla u)\text{ weakly in }X \text{ as } n\to\infty.
$$
Indeed, since the weak limit is independent of the subsequence, the above fact holds for every $n$.
Therefore, we have
\begin{equation}\label{limthm1}
\lim_{n\to\infty}\int_{\Omega}H(\nabla u_n(x))^{p-1}\nabla_{\eta}H(\nabla u_n(x))\nabla\phi\,dx=\int_{\Omega}H(\nabla u)^{p-1}\nabla_{\eta}H(\nabla u)\nabla\phi\,dx,
\end{equation}
for every $\phi\in C_c^{\infty}(\Omega)$. Moreover by the Lebsegue dominated convergence theorem, we have
\begin{equation}\label{limthm2}
\lim_{n\to\infty}\int_{\Omega}\frac{f_n}{\big(u_n+\frac{1}{n}\big)^{q(x)}}\phi\,dx=\int_{\Omega}\frac{f}{u^{q(x)}}\,\phi\,dx\text{ for all }\phi\in C_c^{\infty}(\Omega).
\end{equation}
Now, we define by 
$$
u:=\lim_{n\to\infty}u_n.
$$
Then, due to the monotonicity of $\{u_n\}$ we get $u\geq c_{\omega}>0$ for every $\omega\Subset\Omega$. Hence from \eqref{limthm1} and \eqref{limthm2}, we get $u$ is our required solution and the result follows.

Noting Lemma \ref{embedding}, for $p\geq N$ the result follows similarly.
\end{enumerate}
\noi \textbf{Proof of Theorem \ref{thm1new}:} Noting Lemma \ref{AI} and Remark \ref{sinrmk}, proceeding similarly as in the proof of Theorem \ref{thm1}, the result follows.

\noi \textbf{Proof of Theorem \ref{thm3}:}
As a resultant of Lemma \ref{non-negative} and Lemma \ref{apriori}, upto a subsequence we get that $\zeta_\e \rightharpoonup \zeta_0$ and $\nu_\e \rightharpoonup \nu_0$ weakly in $X$ as $\e \to 0^+$, for some non negative $\zeta_0,\nu_0\in X$.  In the sequel, we establish that $\zeta_0\neq \nu_0$ and forms a weak solution to our problem \eqref{maineqn1}. For convenience, we denote by $v_0$ either $\zeta_0$ or $\nu_0$. We prove the result in the following steps.\\
\textbf{Step $1$.} Here, we prove that $v_0\in X$ is a weak solution to the problem \eqref{maineqn1}.\\
We observe that for any $\e\in(0,1)$ and $t\geq 0$,
$$
\frac{\la}{(t+\e)^{q(x)}}+t^r\geq \frac{\la}{(t+1)^{q(x)}}+t^r\geq \text{min}\left\{1,\frac{\la}{2}\right\}:=C>0, \text{ say}.
$$
As a consequence we get
$$
-\Delta_{H,p} \,v_\e=\frac{\la}{(v_\e+\e)^{q(x)}}+v_\e^r\geq C>0.
$$
By Lemma \ref{auxresult}, let $\xi\in X$ satisfies 
$$
-\Delta_{H,p}\xi=C\text{ in }\Omega,\,\xi>0\text{ in }\Omega,
$$
such that for every $\omega\Subset\Om$, there exists a constant $c_{\omega}>0$ satisfying $\xi\geq c_{\omega}>0$ in $\Om$. 
Then, for every nonnegative $\phi\in X$, we have
\begin{align*}
\int_{\Om}H(v_\e)^{p-1}\nabla_{\eta}H(v_\e)\nabla\phi\,dx&=\int_{\Om}\Big(\frac{\la}{(v_\e+\e)^{q(x)}}+v_\e^r\Big)\phi\,dx\\
&\geq\int_{\Om}C\phi\,dx=\int_{\Om}H(\nabla\xi)^{p-1}\nabla_{\eta}H(\nabla\xi)\nabla\phi\,dx.
\end{align*}
Now choosing $\phi=(\xi-v_\e)^+$, we obtain
$$
\int_{\Om}\Big\{H(\nabla\xi)^{p-1}\nabla_{\eta}H(\nabla\xi)-H(\nabla v_\e)^{p-1}\nabla_{\eta}H(\nabla v_\e)\Big\}\nabla(\xi-v_\e)^+\,dx\leq 0.
$$
Now applying Lemma \ref{alg}, we obtain $v_\e\geq \xi$ in $\Om$. Hence, we get the existence of a constant $c_{\omega}>0$ (independent of $\e$) such that
\begin{equation}\label{uniform}
v_\e\geq c_\omega>0,\text{ for every }\omega\Subset\Om. 
\end{equation}
Therefore using $r+1<p^*$ along with Lemma \ref{apriori} and the fact \eqref{uniform} as in the proof of Theorem \ref{thm1} we can apply Boccardo-Murat \cite[Theorem 2.1]{BocMu} to obtain
$$
\int_{\Om}H(\nabla v_0)^{p-1}\nabla_{\eta}H(\nabla v_0)\nabla \phi\,dx=\la\int_{\Om}\frac{\phi}{v_0^q(x)}\,dx+\int_{\Om}v_0^{r}\phi\,dx.
$$
Hence the claim follows.\\
\textbf{Step $2$.} Now we are going to prove that $\zeta_0\neq \nu_0$.\\
Choosing $\phi=v_\e\in X$ as a test function in $(P_{\la,\e})$ we get
$$
\int_{\Om}H(\nabla v_\e)^{p-1}\nabla_{\eta}H(\nabla v_\e)\nabla\phi\,dx=\la\int_{\Om}\frac{v_\e}{(v_\e+\e)^{q(x)}}\,dx+\int_{\Om}v_\e^{r+1}\,dx.
$$
Since $r+1<p^{*}$, using Lemma \ref{embedding} we obtain
$$
\lim\limits_{\e\to 0^+}\int_{\Om}(v_\e)^{r+1}\,dx=\int_{\Om}v_0^{r+1}\,dx.
$$
Moreover, since
$$
0\leq \frac{v_\e}{(v_\e+\e)^{q(x)}}\leq v_\e^{1-q(x)},
$$
using \eqref{PS2-new} together with Vitali convergence theorem, we get
$$
\la\lim\limits_{\e\to 0^+}\int_{\Om}\frac{v_\e}{(v_\e+\e)^{q(x)}}\,dx=\la\int_{\Om}v_0^{1-q(x)}\,dx.
$$
Therefore for every $\phi\in X$, we have
$$
\lim_{\e\to 0^+}\int_{\Om}H(\nabla v_\e)^{p-1}\nabla_{\eta}H(\nabla v_\e)\nabla\phi\,dx=\la\int_{\Om}v_0^{1-q(x)}\,dx+\int_{\Om}v_0^{r+1}\,dx.
$$
Using Lemma \ref{testfn} we can choose $\phi=v_0$ as  a test function in \eqref{pursingtest} to deduce that
$$
\int_{\Om}H(\nabla v_0)^p\,dx=\la\int_{\Om}v_0^{1-q(x)}\,dx+\int_{\Om}v_0^{r+1}\,dx.
$$
Hence we obtain
$$
\lim\limits_{\e\to 0}\int_{\Om}H(\nabla v_\e)^p\,dx=\int_{\Om}H(\nabla v_0)^p\,dx.
$$
By the Vitali convergence theorem, we get
$$
\lim\limits_{\e\to 0}\int_{\Om}[(v_\e+\e)^{1-q(x)}-\e^{1-q(x)}]\,dx=\int_{\Om}v_0^{1-q(x)}\,dx,
$$
which together with the strong convergence of $v_\e$ in $X$ implies
$
\lim\limits_{\e\to 0}I_{\la,\e}(v_\e)=I_{\la}(v_0).
$
Hence from \eqref{limit-pass} we get $\zeta_0\neq \nu_0.$ 

\noi \textbf{Proof of Theorem \ref{thm3new}:} Noting Lemma \ref{AI} and Remark \ref{nonnegrmk}, proceeding similarly as in the proof of Theorem \ref{thm3}, the result follows.

 Kaushik Bal\\
Department of Mathematics and Statistics,\\
Indian Institute of Technology Kanpur,\\
Kanpur-208016,\\
Uttar Pradesh, India\\
Email: kaushik@iitk.ac.in\\

 Prashanta Garain\\
Department of Mathematics,\\
Ben-Gurion University of the Negev,\\
P.O.B.-653,\\
Beer Sheva-8410501, Israel\\
Email: pgarain92@gmail.com\\

 Tuhina Mukherjee\\
Department of Mathematics,\\
Indian Institute of Technology Jodhpur,\\
Jodhpur-342037,\\
Rajasthan, India\\
Email: tulimukh@gmail.com, tuhina@iitj.ac.in

\end{document}